\newtheorem{Theorem}{Theorem}[section]
\newtheorem{Proposition}[Theorem]{Proposition}
\newtheorem{Lemma}[Theorem]{Lemma}
\newtheorem{Corollary}[Theorem]{Corollary}
\theoremstyle{definition}
\newtheorem{Definition}[Theorem]{Definition}
\newtheorem{Remark}[Theorem]{Remark}
\newcommand{\bTheorem}[1]{
\begin{Theorem} \label{T#1} }
\newcommand{\eT}{\end{Theorem}}
\newcommand{\bProposition}[1]{
\begin{Proposition} \label{P#1}}
\newcommand{\eP}{\end{Proposition}}
\newcommand{\bLemma}[1]{
\begin{Lemma} \label{L#1} }
\newcommand{\eL}{\end{Lemma}}
\newcommand{\bCorollary}[1]{
\begin{Corollary} \label{C#1} }
\newcommand{\eC}{\end{Corollary}}
\newcommand{\bRemark}[1]{
\begin{Remark} \label{R#1} }
\newcommand{\eR}{\end{Remark}}
\newcommand{\bDefinition}[1]{
\begin{Definition} \label{D#1} }
\newcommand{\eD}{\end{Definition}}
\newcommand{\Del}{\Delta_x}
\newcommand{\prst}{\mathcal{P}}
\newcommand{\vrn}{\vr_n}
\newcommand{\vun}{\vu_n} 
\newcommand{\vmn}{\vm_n}
\newcommand{\tvm}{\widetilde{\vc{m}}}
\newcommand{\bfphi}{\boldsymbol{\varphi}}
\newcommand{\bFormula}[1]{
\begin{equation} \label{#1}}
\newcommand{\eF}{\end{equation}}
\newcommand{\Ov}[1]{\overline{#1}}
\newcommand{\DC}{C^\infty_c}
\newcommand{\aleq}{\lesssim}
\newcommand{\vr}{\varrho}
\newcommand{\tvr}{\tilde \vr}
\newcommand{\tvu}{{\tilde \vu}}
\newcommand{\vu}{\vc{u}}
\newcommand{\vm}{\vc{m}}
\newcommand{\vc}[1]{{\bf #1}}
\newcommand{\Div}{{\rm div}_x}
\newcommand{\Grad}{\nabla_x}
\newcommand{\dx}{\,{\rm d} {x}}
\newcommand{\dt}{\,{\rm d} t }
\newcommand{\intQp}[1]{\int_{R^2} #1 \ \dx}
\newcommand{\intQ}[1]{\int_{{Q}} #1 \ \dx}
\newcommand{\vv}{\vc{v}}
\newcommand{\D}{{\rm d}}
\newcommand{\ep}{\varepsilon}
\newcommand{\expe}[1]{ \mathbb{E} \left[ #1 \right] }
\newcommand{\tb}{\textcolor{black}}
\def\softd{{\leavevmode\setbox1=\hbox{d}%
          \hbox to 1.05\wd1{d\kern-0.4ex{\char039}\hss}}}
\definecolor{Cgrey}{rgb}{0.85,0.85,0.85}
\definecolor{Cblue}{rgb}{0.50,0.85,0.85}
\definecolor{Cred}{rgb}{1,0,0}
\definecolor{fancy}{rgb}{0.10,0.85,0.10}
\newcommand\Cbox[2]{%
    \newbox\contentbox%
    \newbox\bkgdbox%
    \setbox\contentbox\hbox to \hsize{%
        \vtop{
            \kern\columnsep
            \hbox to \hsize{%
                \kern\columnsep%
                \advance\hsize by -2\columnsep%
                \setlength{\textwidth}{\hsize}%
                \vbox{
                    \parskip=\baselineskip
                    \parindent=0bp
                    #2
                }%
                \kern\columnsep%
            }%
            \kern\columnsep%
        }%
    }%
    \setbox\bkgdbox\vbox{
        \color{#1}
        \hrule width  \wd\contentbox %
               height \ht\contentbox %
               depth  \dp\contentbox
        \color{black}
    }%
    \wd\bkgdbox=0bp%
    \vbox{\hbox to \hsize{\box\bkgdbox\box\contentbox}}%
    \vskip\baselineskip%
}
\date{}
\begin{document}


\title[Randomness in compressible fluid flows past an obstacle]{\tb{Randomness} in  compressible fluid flows past an obstacle}

\author{Eduard Feireisl}
\address[E. Feireisl]{Institute of Mathematics AS CR, \v{Z}itn\'a 25, 115 67 Praha 1, Czech Republic
\and Institute of Mathematics, TU Berlin, Strasse des 17.Juni, Berlin, Germany }
\email{feireisl@math.cas.cz}
\thanks{The research of E.F. leading to these results has received funding from
the Czech Sciences Foundation (GA\v CR), Grant Agreement
21--02411S. The Institute of Mathematics of the Academy of Sciences of
the Czech Republic is supported by RVO:67985840. The stay of E.F. at TU Berlin is supported by Einstein Foundation, Berlin.}

\author{Martina Hofmanov\'a}
\address[M. Hofmanov\'a]{Fakult\"at f\"ur Mathematik, Universit\"at Bielefeld, D-33501 Bielefeld, Germany}
\email{hofmanova@math.uni-bielefeld.de}
\thanks{M.H. gratefully acknowledges the financial support by the German Science Foundation DFG via the Collaborative Research Center SFB1283.}


\begin{abstract}

We consider a statistical limit of solutions to the compressible Navier--Stokes system in the high Reynolds number regime in a domain exterior to a rigid body. We investigate to what extent \tb{this highly turbulent regime can be modeled} by an external stochastic perturbation, as \tb{suggested} in the related physics literature.
To this end, we interpret the statistical limit as a stochastic process on the associated trajectory space. We suppose that the 
limit process is statistically equivalent to a solution of the stochastic compressible Euler system. Then, necessarily, 
\begin{itemize}  
\item the stochastic forcing is not active -- the limit is a \tb{statistical} solution of the deterministic Euler system;
\item the solutions S--converge to the limit; 
\item if, in addition, the expected value of the limit process solves the Euler system, then the limit is deterministic and the convergence is strong in the $L^p$--sense.  
\end{itemize}
\tb{These results strongly indicate} that a stochastic forcing may not be a suitable model for turbulent randomness in 
compressible fluid flows.

\end{abstract}

\keywords{compressible Euler system, vanishing viscosity limit, statistical solution}
\date{\today}

\maketitle

\section{Introduction}
\label{i}

A largely used approach in the mathematical studies of hydrodynamic turbulence is to add certain stochastic perturbation to the model. For instance, Yakhot and Orszak \cite{YakOrs} suggested that a stochastically perturbed Navier--Stokes system shall possess the same \emph{statistical properties} as the deterministic Navier--Stokes system with general inhomogeneous boundary conditions. It is expected that turbulence is created at the boundary and therefore by boundary conditions, and, accordingly, the study of the associated boundary layer shall play an essential role. Due to the substantial theoretical difficulties, however, it turns out to be more easily amenable to rigorous analysis to replace boundary conditions by an external stochastic forcing term. This is one of the reasons why the field of stochastic PDEs related to motion of fluids gained a massive importance in the contemporary mathematics research.

Another approach how to observe turbulent behavior of fluids is increasing the Reynolds number, meaning following the so-called vanishing viscosity regime. On the formal level, the Navier--Stokes system then approaches the Euler system and the statistical behavior \tb{of the fluid flow} shall be therefore reflected in the Euler equations in a certain sense. In the present paper, we investigate the question whether the turbulent randomness observed along the vanishing viscosity limit can be obtained from the Euler system directly by including a suitable stochastic perturbation.

As pointed out, the Euler system is supposed to capture the behavior of real (viscous) fluids in the vanishing viscosity 
or, more precisely, large Reynolds number regime. 
The viscosity solutions resulting from this process are in turn considered to be physically relevant.
Note that solutions of the (compressible) Euler system are known to develop singularities in a finite time, whereas 
the initial--value problem is ill--posed in the class of weak solutions. Relevant examples in the 
incompressible case are discussed by Bressan and Murray \cite{BresMur}, Buckmaster and Vicol \cite{BucVic},
Elgindi and Jeong \cite{ElgJeo}, and in the compressible case by 
Chiodaroli et al. \cite{ChKrMaSwI}, De Lellis and Sz\' ekelyhidi \cite{DelSze3}, among others. The class of solutions 
\tb{obtained through the vanishing viscosity limit may represent a chance} to restore well--posedness of the Euler system in some sense.

A rigorous identification of the vanishing viscosity limit is hampered by two principal 
difficulties:
\begin{itemize}
\item the absence of sufficiently strong uniform bounds to perform the limit in nonlinear terms; 
\item the viscous boundary layer created by the incompatibility of the no--slip boundary condition satisfied by the viscous fluid 
and the impermeability condition for the Euler system, see the survey by E \cite{E1}. 
\end{itemize}
The essential role of viscosity in a neighborhood of objects immersed in a fluid in motion is demonstrated by 
D'Alembert's paradox, see Stewartson \cite{Stew}. The interaction of the fluid with the physical boundary 
in the high Reynolds number regime gives rise to turbulent 
phenomena observed in real world experiments, see 
the \tb{monograph by Davidson \cite{DAVI}
or the nice survey by} Bonheure, Gazzola, and Sperone \cite{BoGaSp}, among many others.

Leaving apart the rather complex problem of viscous boundary layer, one may still legitimately ask if the Euler system can be used to describe the vanishing viscosity limit out of the boundary. The appearance of wakes, fluid separation and similar phenomena observed in the turbulent regime suggests to use \emph{statistical methods} to obtain a rigorous description. Apparently, the fluid is driven by the instabilities in the boundary area; whence one may anticipate the Euler system with complicated oscillatory boundary conditions 
to be the relevant model. The role of boundary conditions is often replaced by random forcing represented by a stochastic integral. The goal of this paper is to discuss to which extent such a scenario can be rigorously justified. 

We consider a compressible, viscous \emph{Navier--Stokes fluid} in an unbounded domain $Q \subset R^d$, $d=2,3$, exterior to a simply connected convex compact set $B$. We suppose the non-slip boundary conditions on $\partial B$, and prescribe the far field values of the density 
$\vr$, and the velocity $\vu$ for $|x| \to \infty$. The relevant system of equations reads:
\begin{equation} \label{i1}
\partial_t \vr + \Div (\vr \vu) = 0,
\end{equation}
\begin{equation} \label{i2}
\partial_t (\vr \vu) + \Div (\vr \vu \otimes \vu) + \Grad p(\vr) = \Div \mathbb{S} (\Grad \vu), 
\end{equation}
with the boundary and far field conditions 
\begin{equation} \label{i3}
\vu|_{\partial Q} = 0, 
\end{equation} 
\begin{equation} \label{i4}
\vu \to \vu_\infty, \ \vr \to \vr_\infty \ \mbox{as}\ |x| \to \infty.
\end{equation}
The viscous stress $\mathbb{S}$ is given by Newton's rheological law 
\begin{equation} \label{i5}
\mathbb{S} (\Grad \vu) = \mu \left( \Grad \vu + \Grad^t \vu - \frac{2}{d} \Div \vu \mathbb{I} \right) + 
\lambda \Div \vu \mathbb{I}, \ \mu > 0, \ \lambda \geq 0;
\end{equation}
the pressure $p$ is related to the density through the isentropic equation of state 
\begin{equation} \label{i6} 
p(\vr) = a \vr^\gamma, \ a > 0, \ \gamma > 1.
\end{equation}

Our aim is to study the asymptotic behavior of solutions to the problem \eqref{i1}--\eqref{i4} in the vanishing viscosity regime 
$\mu \searrow 0$, $\lambda \searrow 0$. To fix ideas, suppose 
\[
\mu_n = \ep_n \mu,\ \lambda_n = \ep_n \lambda ,\ \ep_n \to 0 \ \mbox{as}\ n \to \infty,  
\]
and denote the corresponding solution $(\vrn, \vmn)$, $\vmn = \vrn \vun$. Extending the functions $\vu_\infty$, $\vr_\infty$ inside $Q$, $\vu_\infty|_{\partial Q} = 0$, we introduce the relative energy 
\[
E\left(\vr, \vu \ \Big| \vr_\infty, \vu_\infty \right) = 
\frac{1}{2} \vr |\vu - \vu_\infty|^2 + P(\vr) - P'(\vr_\infty) (\vr - \vr_\infty) - P(\vr_\infty),\ 
P(\vr) = \frac{a}{\gamma - 1} \vr^\gamma.
\]  

Motivated by the numerical experiments of Elling \cite{ELLI2}, \cite{ELLI3}, we describe the vanishing viscosity limit 
in a statistical way. To this end, we introduce 
a suitable  trajectory space $\mathcal{T} $
in which the solutions $(\vrn, \vmn)$ live and \tb{denote} $\mathfrak{P}(\mathcal{T})$ the set of Borel probability measures on 
$\mathcal{T}$. The C\` esaro averages
\[
\mathcal{V}_N = \frac{1}{N} \sum_{n=1}^N \delta_{(\vrn, \vmn)} \in \mathfrak{P} (\mathcal{T}),\ 
\delta_h \ \mbox{-Dirac mass centered at}\ h,
\]
represent \emph{statistical solutions} of the Navier--Stokes system in the vanishing viscosity regime.
To perform the asymptotic limit $N \to \infty$, 
we suppose the  energy bound 
\begin{equation}\label{eq:9}
\frac{1}{N} \sum_{n=1}^N \left[ \sup_{t \in (0,T) } \intQ{ E\left(\vrn, \vun  \Big| \vr_\infty, \vu_\infty \right) } 
+ \ep_n \int_0^T \intQ{ \mathbb{S} (\Grad \vun) : \Grad \vun } \dt \right]  \leq \Ov{\mathcal{E}}
\end{equation}
uniformly for $N \to \infty$.  

Our programme consists of several steps.
First, we show that the family of measures $\mathcal{V}_N$ is tight; whence
\[
\mathcal{V}_N \to \mathcal{V} \ \mbox{narrowly in}\ \mathfrak{P}(\mathcal{T}),
\] 
at least for a suitable subsequence.
Then,  we denote by $(r,\vc{w})$ the canonical process on $\mathcal{T}$ (see Section~\ref{s:3.1} for more details) and  introduce the following notion of \emph{statistical equivalence}.

\begin{Definition}[Statistical equivalence] \label{iD1}

Let  $\mathcal{P}_{i}$, $i=1,2$, be  Borel probability measures on $\mathcal{T}$. We say that the two measures  are \emph{statistically equivalent} if the 
following holds:
\begin{itemize}
\item {\bf Equality of expectations of density and momentum.}
\begin{equation} \label{SE1}
\begin{aligned}
\mathbb{E}_{\mathcal{P}_{1}}\left[ \int_0^T \intQ{ r \varphi } \dt \right] &= \mathbb{E}_{{\mathcal{P}_{2}}}\left[ \int_0^T \intQ{ r \varphi } \dt  \right],\\ 
\mathbb{E}_{\mathcal{P}_{1}}\left[ \int_0^T \intQ{ \vc{w} \cdot \bfphi } \dt \right] &= \mathbb{E}_{{\mathcal{P}_{2}}}\left[ \int_0^T \intQ{ \vc{w} \cdot \bfphi } \dt \right],
\end{aligned}
\end{equation}
for any $\varphi \in \DC((0,T) \times Q)$, $\bfphi \in \DC((0,T) \times Q; R^d)$.
\item {\bf Equality of expectations of kinetic, internal and angular energy.}
\begin{equation} \label{SE2}
\begin{split}
\mathbb{E}_{\mathcal{P}_{1}}\left[ \int_0^T \intQ{ 1_{r > 0} \frac{|\vc{w} |^2}{r} \varphi      } \dt \right] &= 
\mathbb{E}_{\mathcal{P}_{2}}\left[ \int_0^T  \intQ{ 1_{r > 0} \frac{|\vc{w} |^2}{r} \varphi     }    \dt  \right], \\ 
\mathbb{E}_{\mathcal{P}_{1}}\left[ \int_0^T \intQ{ P(r) \varphi } \dt \right] &= 
\mathbb{E}_{{\mathcal{P}_{2}}}\left[ \int_0^T \intQ{ P(r) \varphi } \dt \right],\\ 
\mathbb{E}_{\mathcal{P}_{1}}\left[ \int_0^T \intQ{ 1_{r > 0} \frac{1}{r} (\mathbb{J}_{x_0} \cdot \vc{w} ) \cdot \vc{w} \ \varphi } \dt \right]
&= \mathbb{E}_{{\mathcal{P}_{2}}}\left[ \int_0^T \intQ{ 1_{r > 0} \frac{1}{r} (\mathbb{J}_{x_0} \cdot \vc{w} ) \cdot \vc{w} \ \varphi } \dt\right],
\end{split}
\end{equation}
\tb{for any} $x_0 \in R^d$, and any $\varphi \in \DC((0,T) \times Q)$, where 
\[
\mathbb{J}_{x_0}(x) = |x - x_0|^2 \mathbb{I} - (x - x_0) \otimes (x - x_0).
\]
\end{itemize}
In \eqref{SE1} and \eqref{SE2}, we tacitly assume that all the  expectations  are finite. In the same spirit, we say that two stochastic processes defined on possibly different probability spaces are statistically equivalent, if their probability laws are statistically equivalent.
\end{Definition}

\begin{Remark}\label{r:1.2}
The reader may have expected the statistical equivalence of two stochastic processes to be defined as their equivalence in law. We consider instead a much weaker notion, which postulates only equality of expectations of certain statistically and physically relevant quantities characterizing the fluid flow. In particular, expectations of other quantities or higher order moments do not need to coincide.
\end{Remark}

Next, we \emph{suppose} that \tb{the limit} $\mathcal{V}$ is statistically equivalent
to the probability law induced by a process $(\tvr, \tvm)$, which  is a solution of the stochastically driven Euler equation. Namely,  the process $(\tvr, \tvm)$ solves in the sense of distributions the system
\begin{equation} \label{i7}
\begin{split}
\D \tvr + \Div \tvm \dt &= 0, \\
\D \tvm + \Div \left( \frac{\tvm \otimes \tvm}{\tvr} \right) \dt + \Grad p(\tvr) \dt &= \vc{F} \D W,
\end{split}
\end{equation}
where 
\[
\vc{F} \D W = \sum_{k \geq 1} \vc{F}_k \D W_k
\]
with a family of diffusion coefficients $(\vc{F}_k )_{k \geq 1}$ satisfying a suitable stochastic integrability assumption and a cylindrical Wiener process $(W_k)_{k \geq 1}$.

\begin{Remark}\label{r:1.3}
It becomes clear at this point that a stronger definition of statistical equivalence, such as equality in law, would be too restrictive and would immediately rule out the possibility of modeling the inviscid limit by the stochastic Euler system. Indeed it will be seen in the analysis below that the limit measure $\mathcal{V}$ is supported on  momenta of bounded variation in time  with values in some negative Sobolev space. But this is not the case for $\tvm$ satisfying \eqref{i7} due to the limited regularity of the stochastic integral, unless  $\mathbf{F}\equiv 0$. However, the statistical equivalence in the sense Definition~\ref{iD1}, specifically, equality of only several chosen moments, is \tb{{\it a priori}} not excluded. 

Similarly, note that strengthening the statistical bound \eqref{eq:9} to 
$$
\sup_{n\in\mathbb{N}}\left[ \sup_{t \in (0,T) } \intQ{ E\left(\vrn, \vun  \Big| \vr_\infty, \vu_\infty \right) } 
+ \ep_n \int_0^T \intQ{ \mathbb{S} (\Grad \vun) : \Grad \vun } \dt \right]  \leq \Ov{\mathcal{E}}
$$
would imply an essentially \tb{uniform bound} for the limit process, which is also not compatible with solutions to the stochastic Euler system \eqref{i7}. This \tb{motivates} the more natural and \tb{essentially} weaker statistically uniform bound in the sense of the average in \eqref{eq:9}.
\end{Remark}

Let $(\vr,\vm)$ be a Skorokhod representation of  the limit measure $\mathcal{V}$. Our main result asserts that validity of \eqref{i7} necessarily implies: 
\begin{enumerate}
\item $(\vr, \vm)$ is a weak statistical solution of the \emph{deterministic} Euler system, \tb{in particular, 
the genuinely stochastic model becomes irrelevant.}
\item The sequence $(\vr_n, \vm_n)$ S--converges in the sense of \cite{Fei2020A} to a parametrized 
measure
\[
\widetilde{\mathcal{V}} \in L^\infty_{\rm weak-(*)}([0,T] \times Q; \mathfrak{P}[R^{d+1}]),
\] 
\[
\begin{split}
\int_0^T \intQ{ \left< \widetilde{\mathcal{V}}_{t,x}; b \right> \varphi } \dt &= 
\expe{ \int_0^T \intQ{ b(\vr, \vc{m}) \varphi} \dt } 
\\ \mbox{for any}\ b \in C_c(R^{d + 1}), \ &\mbox{\tb{and any}}\ \varphi \in C_c((0,T) \times Q).
\end{split}
\]

\item If, in addition, the barycenter 
\[
(\overline\vr, \overline\vm) = \int_{\mathcal{T}} (r, \vc{w}) \D \mathcal{V} \in \mathcal{T}
\]
is a weak solution to the Euler system, then the limit is deterministic
\[
\mathcal{V} = \delta_{(\overline\vr, \overline\vm)},
\] 
and
 the sequence $(\vrn, \vmn)$ statistically converges to $(\overline\vr, \overline\vm)$, specifically,
\begin{equation} \label{i8}
\frac{1}{N} \# \left\{ n \leq N \Big|  \| \vrn - \overline\vr \|_{L^\gamma(K)} + \| \vmn - 
\overline\vm \|_{L^{\frac{2 \gamma}{\gamma + 1}}(K; R^d)} > \ep  \right\} \to 
0 \ \mbox{as}\ N \to \infty
\end{equation}
for any $\ep > 0$, and any compact \tb{$K \subset [0,T] \times Q$}.
\end{enumerate}
The result can be extended to the driving force 
\[
\sigma \cdot \Grad \vm \circ \D W_1 + \vc{F} \D W_2 
\]
in the absence of the obstacle.

As a corollary, we obtain the dichotomy proved in \cite{MarEd} in the case $Q = R^d$: 
If  
\[
\vr_n \to \vr\ \mbox{weakly-(*) in}\ L^\infty(0,T; L^\gamma_{\rm loc} (Q)), \ \vm_n \to \vm 
\ \mbox{weakly-(*) in}\ L^\infty(0,T; L^{\frac{2 \gamma}{\gamma + 1}}_{\rm loc} (Q)), 
\]
then either 
\[
\vr_n \to \vr \ \mbox{in}\ L^\gamma_{\rm loc}([0,T] \times Q),\ 
\vm_n \to \vm \ \mbox{in}\ L^{\frac{2 \gamma}{\gamma + 1}}_{\rm loc}([0,T] \times Q; R^d),
\]
or $(\vr, \vm)$ \emph{is not} a solution of the Euler system \eqref{i7} (with $\mathbf{F} = 0$.)

To summarize, the above results can be interpreted in the following way. If we adopt the statistical limit as our working hypothesis, then 
the limit fluid motion is never statistically equivalent to a stochastic Euler system. If, in addition, we accept the Kolmogorov hypothesis advocated 
by Chen and Glimm \cite{CheGli} for compressible turbulence \tb{(in the case $Q = R^d$)}, then the S--convergence is the right tool to identify the limit. Last, if 
the expected value (barycenter) of the limit statistical solution solves the Euler system, then the statistical limit is a single deterministic solution. \tb{It is worth noting that the
results are independent of a specific choice of the initial data as well as the boundary conditions on $\partial Q$. This 
fact gives rise to full flexibility to cover all the physically relevant situations. In addition, we do not postulate any form 
of energy balance, neither on the approximate nor on the limit level. In the light of the experimental evidence, 
cf. \cite{BoGaSp}, our results strongly indicate that a stochastically driven Euler system 
is not a relevant model of compressible turbulence driven by a rigid obstacle. Indeed the graphic material collected 
in \cite{BoGaSp} is more reminiscent of the weak rather than strong convergence in the high Reynolds number regime, which, 
in view of the above results, 
excludes the Euler system to describe the statistical limit.}

The paper is organized as follows. In Section \ref{ns}, we collect the available results concerning the 
Navier--Stokes problem \eqref{i1}--\eqref{i6}. In Section \ref{sl}, we identify the measure $\mathcal{V}$ -- the statistical 
limit of the sequence $(\vrn, \vmn)_{n \geq 1}$. 
\tb{In Section \ref{R},
we associate to the statistical 
limit $\mathcal{V}$ a defect measure -- Reynolds stress tensor $\mathfrak{R}$ -- characterizing possible oscillations and concentrations 
created in the limit process.  It turns out that $\mathfrak{R}$ is a positive semi--definite tensor--valued finite measure 
on $Q$. Section \ref{sc} is the heart of the paper. We show that $\mathfrak{R}$ vanishes as long as the obstacle $B$ is a convex set, 
and then we examine the statistical convergence in the vanishing viscosity limit.} The paper is concluded by a short discussion in Section \ref{D}.

\section{Navier--Stokes system in exterior domain}
\label{ns}

We recall the available results for the Navier--Stokes system \eqref{i1}--\eqref{i6}. For the sake of simplicity, we suppose that 
$\partial Q$ is regular and $B = R^d \setminus Q$ is a simply connected compact set. In addition, we suppose that
\begin{equation} \label{ns1}
\vr_\infty > 0,\ \vu_\infty \in R \ \mbox{are given constant fields.}
\end{equation}
Accordingly, we may extend $\vu_\infty$ inside $Q$ in such a way that 
\begin{equation} \label{ns2}
\vu_\infty \in C^\infty(Q; R^d), \ \vu_\infty(x) = 0 \ \mbox{for}\ |x| < L,\ 
\vu_\infty(x) = \vu_\infty \ \mbox{for}\ |x| > 2 L 
\end{equation}
for some $L > 0$.

\subsection{Weak solutions} 

First we introduce finite energy \emph{weak solution} to the problem \eqref{i1}--\eqref{i6}. The total energy   
defined as  
\[
E\left(\vr, \vm \right) = 
\left\{ \begin{array}{l} \frac{1}{2} \frac{|\vm|^2}{\vr} + P(\vr) 
\ \mbox{if}\ \vr > 0, \\ 
0 \ \mbox{if}\ \vr = 0,\ \vm = 0, \\ \infty \ \mbox{otherwise}   \end{array} \right.  \ \vm = \vr \vu 
\]
is a convex l.s.c. function of $(\vr, \vm) \in R^{d + 1}$. In view of the non--zero far--field conditions, it is more convenient 
to consider the relative energy 
\[
E\left(\vr, \vm \Big| \vr_\infty, \vu_\infty \right) = 
\frac{1}{2} \frac{|\vm|^2}{\vr} - \vm \cdot \vu_\infty + \frac{1}{2} \vr |\vu_\infty|^2 + 
P(\vr) - P'(\vr_\infty) (\vr - \vr_\infty) - P(\vr_\infty).
\]
As $E$ is convex, the relative energy
can be interpreted as the so--called Bregman distance between $(\vr, \vm)$ and $(\vr_\infty, \vm_\infty)$, $\vm_\infty = \vr_\infty \vu_\infty$,  
specifically 
\[
E\left(\vr, \vm \Big| \vr_\infty, \vu_\infty \right) = E(\vr, \vm) - \partial_{\vr, \vm} E(\vr_\infty, \vm_\infty) 
\cdot (\vr - \vr_\infty, \vm - \vm_\infty) - E(\vr_\infty, \vm_\infty).
\] 

\begin{Definition}[Weak solution to Navier--Stokes system] \label{Dns1}
We say that $(\vr, \vu)$ is \emph{finite energy weak solution} to the Navier--Stokes system \eqref{i1}--\eqref{i6} 
in $(0,T) \times Q$ 
if:  
\begin{itemize}
\item {\bf Finite energy and dissipation rate} 
\begin{equation} \label{ns3}
\intQ{ E\Big(\vr, \vm \Big| \vr_\infty, \vu_\infty \Big) (\tau, \cdot) } + \int_0^\tau \intQ{ \mathbb{S}(\Grad \vu) : \Grad \vu } \dt 
< \infty
\end{equation}
holds for any $0 < \tau \leq T$; 

\item {\bf Equation of continuity}
\[
\left[ \intQ{ \vr \varphi } \right]_{t=\tau_1}^{t= \tau_2} = 
\int_{\tau_1}^{\tau_2} \intQ{ \Big[ \vr \partial_t \varphi + \vr \vu \cdot \Grad \varphi \Big] } \dt
\]
holds for any $0 < \tau_1 < \tau_2 < T$ and $\varphi \in C^1_c ((0,T) \times Q)$;

\item
{\bf Momentum equation} 
\[
\begin{split}
&\left[ \intQ{ \vr \vu \cdot \bfphi } \right]_{ t = \tau_1 }^{t = \tau_2} \\ &= 
\int_{\tau_1}^{\tau_2} \intQ{ \Big[ \vr \vu \cdot \partial_t \bfphi + 
(\vr \vu \otimes \vu) : \Grad \bfphi + p(\vr) \Div \bfphi - \mathbb{S} (\Grad \vu) : \Grad \bfphi \Big] } \dt
\end{split}
\]
holds for any $0 < \tau_1 < \tau_2 < T$ and $\bfphi \in C^1_c ((0,T) \times Q; R^d)$.
                                                                                                                                                                                                                                                                                                                                                                                                                           \end{itemize}

\end{Definition}

Note that boundedness of the total energy and the dissipation rate stated in \eqref{ns3} yields 
the natural bound $\vr \geq 0$ and renders 
all integrals in the weak formulation finite. The definition is usually supplemented with the renormalized equation of continuity as well as the compatibility condition 
\begin{equation} \label{ns4}
(\vu - \vu_\infty) \in L^2(0,T; D^{1,2}(Q; R^d) )
\end{equation}
where $D^{1,2}$ denotes the homogeneous Sobolev space. The energy bound \eqref{ns3} then follows from the associated energy 
inequality 
\[
\begin{split}
\frac{\D}{ \dt} & \intQ{ E \left( \vr, \vu \Big| \vr_\infty, \vu_\infty \right) } + \intQ{ \mathbb{S}(\Grad \vu) : \Grad \vu } \\ 
& \leq -\intQ{ \Big( \vr \vu \otimes \vu + p(\vr) \mathbb{I} \Big) : \Grad \vu_\infty} + 
\frac{1}{2} \intQ{ \vr \vu \cdot \Grad |\vu_\infty|^2 } + \intQ{ \mathbb{S}(\Grad \vu) : \Grad \vu_\infty }.
\end{split}
\]
We refer the reader to the monographs of Lions \cite{LI4}, 
Novotn\' y and Stra\v skraba \cite[Section 7.12.6]{NOST} or Kra\v cmar, Ne\v casov\' a, and Novotn\' y \cite{KrNeNo} 
for the relevant existence results for the initial--value problem under various restrictions imposed on the adiabatic coefficient 
$\gamma$.

Here we consider families of solutions that may fail to satisfy \eqref{ns4}, meaning we do not really specify the boundary behavior of the velocity on the rigid body. Similarly, we also ignore 
the behavior of the initial state.
The only piece of information necessary for our analysis is the uniform bound \eqref{ns3}. 

\section{Statistical limit}
\label{sl}

For a given sequence of $\ep_n \to 0$ we consider the vanishing viscosity limit 
\[
\mu_n = \ep_n \mu, \ \lambda_n = \ep_n \lambda , \ \mu > 0, \ \lambda \geq 0.
\]
We suppose that the Navier--Stokes system admits a related sequence of weak solutions $(\vr_n, \vu_n)_{n=1}^\infty$ in the sense of Definition \ref{Dns1}. Our goal is to study the \emph{statistical properties} of  $(\vr_n, \vu_n)_{n=1}^\infty$. To this end, we associate 
to this sequence a family of measures $\mathcal{V}_N$ supported on the \emph{trajectory space} 
\[
\mathcal{T} = C_{{\rm weak}}([0,T]; L^\gamma_{\rm loc}(Q) \times L^{\frac{2 \gamma}{\gamma + 1}}_{\rm loc}(Q; R^d)),  
\]
\begin{equation} \label{sl1}
\mathcal{V}_N = \frac{1}{N} \sum_{n=1}^N \delta_{(\vr_n, \vm_n)},\ \vm_n = \vr_n \vu_n,
\end{equation}
where $\delta$ denotes the Dirac mass. Note that any finite energy weak solution belongs to $\mathcal{T}$.
Moreover, motivated by the energy bound \eqref{ns3}, we assume
\begin{equation} \label{sl4}
\frac{1}{N} \sum_{n=1}^N \left[ \sup_{0 \leq \tau \leq T}\intQ{ E\left(\vr_n, \vm_n \Big| \vr_\infty, \vu_\infty \right) (\tau, \cdot) } + 
\ep_n \int_0^T \intQ{ \mathbb{S}(\Grad \vu_n) : \Grad \vu_n } \dt \right] \leq \Ov{\mathcal{E}}
\end{equation}
uniformly for $N \to \infty$. Note in particular that we do not assume a uniform bound of the form
\begin{equation*}
\sup_{n}\left[ \sup_{0 \leq \tau \leq T}\intQ{ E\left(\vr_n, \vm_n \Big| \vr_\infty, \vu_\infty \right) (\tau, \cdot) } + 
\ep_n \int_0^T \intQ{ \mathbb{S}(\Grad \vu_n) : \Grad \vu_n } \dt \right] <\infty.
\end{equation*}
In fact, such an assumption would render parts of our analysis below trivial. We consider instead the weaker statistically uniform bound in the sense of the average in \eqref{sl4}, which is more natural for the problem under consideration.

\subsection{Statistical convergence}\label{s:3.1}

Our goal is to show 
\begin{equation} \label{sl5}
\mathcal{V}_N \to \mathcal{V} 
\ \mbox{narrowly in} \ \mathfrak{P}[ \mathcal{T} ] 
\end{equation}
at least for a suitable subsequence $N_k \to \infty$ as $k \to \infty$. According to Prokhorov theorem it is enough to show that 
$(\mathcal{V}_{N})_{N \geq 1}$ is tight. To this end, we denote by $(r,\vc{w})$  the canonical process on $\mathcal{T}$, that is,
$$
(r,\vc{w}):\mathcal{T}\to \mathcal{T},\  (r,\vc{w})(\omega,t)=\omega(t)\ \mbox{for}\ \omega\in\mathcal{T},\, t\in[0,T].
$$
We note that the velocity $\vv$ satisfying $\vc{w}=r\vv$ is well defined  under each $\mathcal{V}_{N}$.
For a probability measure $\mathcal{Q}\in\mathfrak{P}(\mathcal{T})$, we denote by $\mathbb{E}_{\mathcal{Q}}$ the expected value in the probability space $(\mathcal{T},\mathfrak{B}[\mathcal{T}],\mathcal{Q})$. From Section~\ref{R} on, we also use the notation $\mathbb{E}$ without any subscript to denote the expectation on the standard probability space $(\Omega,\mathfrak{B},\mathcal{P})$.

\begin{Lemma} \label{Lsl1}

Under the hypothesis \eqref{sl4}, the family $(\mathcal{V}_{N})_{N \geq 1}$ is tight in $\mathfrak{P}[\mathcal{T}]$.

\end{Lemma}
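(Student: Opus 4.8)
The plan is to verify tightness in $\mathfrak{P}[\mathcal{T}]$ by producing, for every $\eta > 0$, a compact set $K_\eta \subset \mathcal{T}$ with $\mathcal{V}_N(K_\eta) \geq 1 - \eta$ uniformly in $N$. Since $\mathcal{T} = C_{\rm weak}([0,T]; X)$ with $X = L^\gamma_{\rm loc}(Q) \times L^{\frac{2\gamma}{\gamma+1}}_{\rm loc}(Q;R^d)$, I would use the standard characterization of relatively compact sets in a space of weakly continuous functions: a set is relatively compact if (i) the values at each fixed time lie in a fixed bounded (hence weakly-$*$ relatively compact, after passing to weak limits in the reflexive factors) subset of $X$, and (ii) there is uniform equicontinuity when $X$ is equipped with the metric of weak convergence, which it suffices to test against a countable dense family of test functions $\varphi \in \DC(Q)$, $\bfphi \in \DC(Q;R^d)$. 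So the proof decomposes into a spatial bound and a temporal equicontinuity estimate, each of which must hold with probability close to one under $\mathcal{V}_N$.

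For the spatial bound I would exploit the energy bound \eqref{sl4}. By Chebyshev's inequality applied to the $C\acute{\rm e}$saro average, the fraction of indices $n \leq N$ for which $\sup_{0\leq\tau\leq T}\intQ{ E(\vr_n,\vm_n\,|\,\vr_\infty,\vu_\infty)(\tau,\cdot)} > \Lambda$ is at most $\Ov{\mathcal{E}}/\Lambda$, uniformly in $N$. Hence outside an event of $\mathcal{V}_N$-probability $\Ov{\mathcal{E}}/\Lambda$ we have a deterministic bound on the relative energy, which by the structure of $E(\cdot\,|\,\vr_\infty,\vu_\infty)$ (coercivity of $P$ in $\vr$ plus the quadratic term $\frac12\vr|\vu-\vu_\infty|^2$ controlling $|\vm|^{2\gamma/(\gamma+1)}$ on bounded regions via Hölder, as is classical for compressible estimates) yields uniform bounds for $\vr_n$ in $L^\infty(0,T;L^\gamma_{\rm loc}(Q))$ and $\vm_n$ in $L^\infty(0,T;L^{\frac{2\gamma}{\gamma+1}}_{\rm loc}(Q;R^d))$. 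This fixes the bounded set in which the trajectories' time-slices live.

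For the temporal equicontinuity I would test the equation of continuity and the momentum equation against a fixed $\varphi\in\DC(Q)$, $\bfphi\in\DC(Q;R^d)$. For the continuity equation, $\big|\intQ{\vr_n\varphi}(\tau_2)-\intQ{\vr_n\varphi}(\tau_1)\big| \leq \int_{\tau_1}^{\tau_2}\big|\intQ{\vr_n\vu_n\cdot\Grad\varphi}\big|\dt \leq |\tau_2-\tau_1|\,\|\Grad\varphi\|_\infty \sup_\tau\|\vm_n(\tau)\|_{L^1({\rm supp}\,\varphi)}$, which on the good event is bounded by $C(\varphi)|\tau_2-\tau_1|$. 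For the momentum equation, the right-hand side integrand collects the convective term $(\vr_n\vu_n\otimes\vu_n):\Grad\bfphi$, the pressure term $p(\vr_n)\Div\bfphi$, and the viscous term $\ep_n\,\mathbb{S}(\Grad\vu_n):\Grad\bfphi$; the first two are controlled pointwise in time on the good event by the energy bound (the convective term by $\frac12\vr_n|\vu_n|^2 \lesssim E + \vr_n|\vu_\infty|^2 + |\vm_n\cdot\vu_\infty|$, all with finite space-integral on ${\rm supp}\,\bfphi$), while the viscous term is handled by Cauchy--Schwarz in time: $\ep_n\int_0^T\big|\intQ{\mathbb{S}(\Grad\vu_n):\Grad\bfphi}\big|\dt \leq \ep_n^{1/2}\|\Grad\bfphi\|_{L^2}\big(\ep_n\int_0^T\intQ{\mathbb{S}(\Grad\vu_n):\Grad\vu_n}\dt\big)^{1/2}$, which on the good event (where the dissipation integral is $\lesssim\Lambda$) is $\lesssim \ep_n^{1/2}\to 0$ and in particular uniformly bounded. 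Thus $t\mapsto\intQ{\vr_n\varphi}$ and $t\mapsto\intQ{\vm_n\cdot\bfphi}$ are uniformly Lipschitz (hence equicontinuous) on the good event. Intersecting the good events over a countable dense family of test functions (with $\eta$ split into a summable series) and over the spatial bound, and invoking the Arzel\`a--Ascoli–type compactness criterion for $C_{\rm weak}([0,T];X)$, gives the desired compact $K_\eta$, and Prokhorov then yields \eqref{sl5}.

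\textbf{Main obstacle.} The one genuinely delicate point is that the energy bound \eqref{sl4} is only a $C\acute{\rm e}$saro average, not a uniform-in-$n$ bound, so all estimates must be phrased probabilistically via Chebyshev on the empirical measure rather than as deterministic uniform bounds; one must be careful that the "good event" is the \emph{same} set of indices for the spatial bound and all the temporal estimates, which works because they are all consequences of the single quantity appearing inside the bracket in \eqref{sl4}. A secondary technical nuisance is making the metric structure on $C_{\rm weak}([0,T];X)$ with $X$ a Fréchet space of locally integrable functions precise enough that the Arzel\`a--Ascoli argument applies — this is routine but needs the exhaustion of $Q$ by compacts and a diagonal argument over test functions.
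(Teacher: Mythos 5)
Your proof follows essentially the same route as the paper's: spatial bounds from the relative energy via Young's inequality, temporal equicontinuity of $t\mapsto\intQ{r\varphi}$ and $t\mapsto\intQ{\vc{w}\cdot\bfphi}$ obtained by testing the continuity and momentum equations, and Cauchy--Schwarz in time combined with the dissipation bound to control the viscous term. The only differences are in bookkeeping — you extract a Chebyshev-good set of indices, whereas the paper keeps the same three bounds directly in $\mathbb{E}_{\mathcal{V}_N}$-expectation form (\eqref{sl6}, \eqref{sl7}, \eqref{sl8}), which sidesteps any intersection of good events — and one small slip: the momentum process is only H\"older-$\frac{1}{2}$ in time, not Lipschitz, exactly because of the $|\tau_2-\tau_1|^{1/2}$ that your Cauchy--Schwarz step produces, matching the paper's \eqref{sl8}.
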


\begin{proof}
First observe, by virtue of Young's inequality, 
\begin{equation} \label{sl6a}
|\vc{w}|^{\frac{2 \gamma}{\gamma + 1}} + r^\gamma \aleq E(r, \vc{w}).
\end{equation}
Consequently, it follows from \eqref{sl4} that
\begin{equation} \label{sl6}
\begin{aligned}
&\mathbb{E}_{\mathcal{V}_N} \left[ \sup_{0 \leq \tau \leq T} \int_K |\vc{w}|^{\frac{2 \gamma}{\gamma + 1}}\ \dx + \sup_{0 \leq \tau \leq T} \int_K r^\gamma \ \dx 
\right]\\
 &\qquad = \frac{1}{N}\sum_{n=1}^{N}  \left[ \sup_{0 \leq \tau \leq T} \int_K |\vm_{n}|^{\frac{2 \gamma}{\gamma + 1}}\ \dx + \sup_{0 \leq \tau \leq T} \int_K \vr_{n}^\gamma \ \dx 
\right]\leq c (K, \Ov{\mathcal{E}}) 
\end{aligned}
\end{equation}
for any compact $K \subset \Ov{Q}$. 

To complete the proof, it is enough to show uniform bounds on the modulus of continuity of processes
\[
t \in [0,T] \mapsto \intQ{ r(t) \varphi },\ 
t \in [0,T] \mapsto \intQ{ \vc{w}(t) \cdot \bfphi },\ \varphi \in C^1_c(Q),\ \bfphi \in C^1_c (Q; R^d),
\]
under $\mathcal{V}_{N}$.
As $\vr_n$, $\vm_n$ satisfy the equation of continuity, we deduce from \eqref{sl6}
\begin{equation} \label{sl7}
\begin{aligned}
&\mathbb{E}_{\mathcal{V}_N} \left[ \sup_{0\leq\tau_{1}<\tau_{2}\leq T} \frac{\left| \intQ{ \Big[ r(\tau_2, \cdot) - r(\tau_1, \cdot) \Big] \varphi  } \right| }{|\tau_{2}-\tau_{1}|}\right] 
\leq c(\varphi, \Ov{\mathcal{E}}). 
\end{aligned}
\end{equation}
Similarly, the momentum equation yields
\begin{equation} \label{sl8}
\mathbb{E}_{\mathcal{V}_N} \left[\sup_{0\leq\tau_{1}<\tau_{2}\leq T} \frac{\left| \intQ{ \Big[ \vc{w}(\tau_2, \cdot) - \vc{w}(\tau_1, \cdot) \Big] \cdot \bfphi } \right| }{|\tau_{2}-\tau_{1}|^{\frac12}}\right] \leq c(\bfphi, \Ov{\mathcal{E}}). 
\end{equation}
Here, we have used
\begin{equation} \label{sl9}
\|\sqrt{\ep_{n}} \mathbb{S}(\Grad \vu_n) \|^{2}_{L^2(Q; R^{d \times d})} \lesssim \ep_{n}\max\{\mu,\lambda\}  \intQ{ \mathbb{S}(\Grad \vu_n) 
: \Grad \vu_n }.
\end{equation}

\end{proof}

As  the weak topology is not metrizable, the space $\mathcal{T}$ is not a Polish space
and Prokhorov theorem does not directly apply. However, $\mathcal{T}$ is a sub-Polish space in the sense of \cite[Definition~2.1.3]{BrFeHobook}. Therefore, the Jakubowski--Skorokhod \cite{Jakub} implies \eqref{sl5}, at least for a suitable subsequence. 
In particular,
\begin{equation} \label{pro2}
\begin{split}
&\frac{1}{N_{k}} \sum_{n=1}^{N_k} b \left(\int_{0}^{T}\intQ{ \vr_n \varphi_1} \dt, \dots, \int_{0}^{T}\intQ{ \vr_n \varphi_{m_{1}}}\dt, \right. \\
&\hspace{3cm}
\left. \int_{0}^{T}\intQ{ \vm_n \cdot \bfphi_1}\dt , \dots, \int_{0}^{T}\intQ{ \vm_n \cdot \bfphi_{m_{2}}}  \dt \right) \\ 
&\to \mathbb{E}_\mathcal{V}
\left[ b \left(\intQ{ r \varphi_1} , \dots, \intQ{ r \varphi_{m_{1}}}, 
\intQ{ \vc{w} \cdot \bfphi_1} , \dots, \intQ{ \vc{w} \cdot \bfphi_{m_{2}}}   \right) \right] \ \mbox{as}\ k \to \infty 
\end{split} 
\end{equation}
for any $\varphi_i \in C^1_c(Q)$, $i=1,\dots, m_{1}$, $\bfphi_i \in C^1_c(Q; R^d)$, $i=1,\dots, m_{2}$,  $m_{1},m_{2}\in\mathbb{N}$, and any 
$b \in C_c \left( R^{m_{1}+m_{2}} \right)$. In addition, we may also define the barycenter of $\mathcal{V}$ on the trajectory space,
\[
\begin{split}
(\overline\vr, \overline\vm) &= \mathbb{E}_{\mathcal{V}}\left[ (r, \vc{w}) \right] ,\\
\int_0^T \intQ{ \overline\vr \varphi } \dt = 
\mathbb{E}_{\mathcal{V}} \left[ \int_0^T \intQ{ r \varphi } \dt \right] &= \lim_{k \to \infty} \frac{1}{N_k} \sum_{n=1}^{N_k} 
\left[ \int_0^T \intQ{ \vrn \varphi } \dt \right]
\\
 \int_0^T \intQ{ \overline\vm \cdot \bfphi } \dt = 
\mathbb{E}_{\mathcal{V}} \left[ \int_0^T \intQ{ \vc{w} \cdot \bfphi } \dt \right] &= \lim_{k \to \infty} \frac{1}{N_k} \sum_{n=1}^{N_k} 
\left[ \int_0^T \intQ{ \vmn \cdot\bfphi } \dt \right]\\ 
\mbox{for any}\ \varphi \in C^1_c((0,T) \times Q), \ \bfphi \in C^1_c ((0,T) \times Q; R^d).
\end{split}
\]

Summarizing we obtain: 

\begin{Proposition}[Statistical vanishing viscosity limit] \label{Psl1}

Let $(\vrn, \vu_n)_{n=1}^\infty$ be a sequence of weak solutions to the Navier--Stokes system in the sense of Definition 
\ref{Dns1} with the viscosity coefficients 
\[
\mu_n = \ep_n \mu, \ \lambda_n = \ep_n \lambda , \ \mu > 0, \ \lambda \geq 0, \ \ep_n \!\searrow 0. 
\]
Suppose the C\` esaro averages of the total (relative) energy are bounded as in \eqref{sl4}. 

Then there exists $N_k \to \infty$ and a probability measure $\mathcal{V} \in \mathfrak{P}[\mathcal{T}]$ on the trajectory space $\mathcal{T}$ such that  \eqref{pro2} holds.

\end{Proposition}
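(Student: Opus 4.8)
The plan is to verify the two ingredients required by the Jakubowski--Skorokhod representation theorem on the sub-Polish space $\mathcal{T}$: tightness of the laws $(\mathcal{V}_N)_{N\ge1}$, and the existence of a countable family of continuous functionals on $\mathcal{T}$ separating points. Tightness is exactly the content of Lemma~\ref{Lsl1}, so the first step is simply to invoke it. The second step is to recall that $\mathcal{T}=C_{\rm weak}([0,T];L^\gamma_{\rm loc}(Q)\times L^{\frac{2\gamma}{\gamma+1}}_{\rm loc}(Q;R^d))$ is sub-Polish: the maps $\omega\mapsto \langle \omega(t),(\varphi,\bfphi)\rangle$ for $t$ rational and $(\varphi,\bfphi)$ ranging over a countable dense subset of test functions are continuous on $\mathcal{T}$ and separate points, which is precisely \cite[Definition~2.1.3]{BrFeHobook}. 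Hence the Jakubowski--Skorokhod theorem applies and yields a subsequence $N_k\to\infty$ and a Borel probability measure $\mathcal{V}\in\mathfrak{P}[\mathcal{T}]$ with $\mathcal{V}_{N_k}\to\mathcal{V}$ narrowly.

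Next I would translate narrow convergence into the concrete statement \eqref{pro2}. Fix test functions $\varphi_i\in C^1_c(Q)$ and $\bfphi_j\in C^1_c(Q;R^d)$ and $b\in C_c(R^{m_1+m_2})$. The functional
\[
F:\mathcal{T}\to R,\quad F(\omega)=b\Big(\textstyle\int_0^T\!\intQ{\omega^{(1)}(t)\varphi_1}\dt,\dots,\int_0^T\!\intQ{\omega^{(2)}(t)\cdot\bfphi_{m_2}}\dt\Big)
\]
is bounded and continuous on $\mathcal{T}$: for $\omega_n\to\omega$ in $\mathcal{T}$, $\intQ{\omega_n(t)\varphi}\to\intQ{\omega(t)\varphi}$ for every $t$ by definition of the weak-continuity topology, these quantities are uniformly bounded on the image of the convergent sequence by the compact support of $\varphi$ together with the boundedness of the trajectories, so dominated convergence gives convergence of the time integrals, and continuity of $b$ closes the argument. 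Applying the definition of narrow convergence $\mathbb{E}_{\mathcal{V}_{N_k}}[F]\to\mathbb{E}_{\mathcal{V}}[F]$ and unwinding $\mathbb{E}_{\mathcal{V}_{N_k}}[F]=\frac1{N_k}\sum_{n=1}^{N_k}F((\vr_n,\vm_n))$ yields exactly \eqref{pro2}. The barycenter identities follow along the same lines: one first notes that \eqref{sl6} bounds $\mathbb{E}_{\mathcal{V}}[\sup_\tau(\int_K|\vc{w}|^{\frac{2\gamma}{\gamma+1}}+\int_K r^\gamma)]$ by Fatou/lower semicontinuity of the norms under narrow convergence, so the linear functionals $\omega\mapsto\int_0^T\intQ{\omega\,\varphi}\dt$ are $\mathcal{V}$-integrable, and then one applies \eqref{pro2} with $b$ approximating the identity (truncating and using uniform integrability coming from the $L^\gamma$, $L^{\frac{2\gamma}{\gamma+1}}$ bounds just mentioned) to pass from compactly supported $b$ to the linear functional.

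I expect the main obstacle to be the non-metrizability of $\mathcal{T}$ and the attendant need to work outside the classical Prokhorov--Skorokhod framework, which is why the verification that $\mathcal{T}$ is sub-Polish (equivalently, that it carries a countable point-separating family of continuous functions compatible with tightness coming from weak-$L^p$ type bounds) is the delicate point rather than a formality; once that structural fact is in place, the rest is soft. A secondary technical point is the passage from test functions $b\in C_c$ in \eqref{pro2} to the unbounded linear functionals defining the barycenter, which requires the uniform integrability furnished by the energy bound \eqref{sl4} via \eqref{sl6a}; this is routine but must be done with care since \eqref{sl4} is only an average bound, so one controls $\mathbb{E}_{\mathcal{V}}$ of the relevant quantities and not their supremum over the sequence.
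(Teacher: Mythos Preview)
Your proposal is correct and follows essentially the same route as the paper: invoke Lemma~\ref{Lsl1} for tightness, observe that $\mathcal{T}$ is sub-Polish in the sense of \cite[Definition~2.1.3]{BrFeHobook}, apply Jakubowski's theorem to extract a narrowly convergent subsequence, and read off \eqref{pro2} as convergence of expectations of bounded continuous functionals. The paper's argument is extremely terse (the ``proof'' is the short paragraph preceding the proposition), and you have correctly expanded the two points it leaves implicit---the explicit separating family on $\mathcal{T}$ and the verification that the functional $F$ in \eqref{pro2} is bounded and sequentially continuous on $\mathcal{T}$---as well as the uniform-integrability step needed to pass from $b\in C_c$ to the linear barycenter functionals.
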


\section{Reynolds defect}
\label{R}

In order to identify the statistical limit $\mathcal{V}$ with a stochastic process, we apply Skorokhod representation theorem, or rather its generalized version by Jakubowski \cite{Jakub}. It is convenient to extend the class of 
variables $(\vr_n, \vm_n)$ to their nonlinear composition appearing in both the relative energy and the momentum equation. 
Specifically, we consider the quantities 
\begin{equation}\label{eq:100}
1_{\vr_n > 0} \frac{\vmn \otimes \vmn}{\vrn} \in L^\infty_{\rm weak-(*)} (0,T; \mathcal{M}(Q; R^{d \times d}_{\rm sym} ))  ,\ \mbox{and}\ p(\vr_n) \in L^\infty_{\rm weak-(*)} (0,T; \mathcal{M}(Q)), 
\end{equation}
where the symbol $\mathcal{M}$ denotes the space of all Radon (not necessarily finite) measures on $Q$. In view of the fact that
$$
L^\infty_{\rm weak-(*)} (0,T; \mathcal{M}(Q; R^{d \times d}_{\rm sym} )) = \big( L^{1}(0,T; C_{c}(Q,R^{d\times d}_{\rm sym}))\big)^{*},
$$
where  $C_{c}(Q,R_{\rm sym}^{ d\times d})$ is separable, the function spaces in \eqref{eq:100} satisfy the assumptions of Jakubowski's theorem \cite{Jakub}.
 We consider the extended  
measure
\[
\begin{split}
&\Ov{\mathcal{V}}_N = \frac{1}{N} \sum_{n=1}^N \delta_{ \left[
(\vr_n, \vm_n), \ 1_{\vr_n > 0} \frac{\vmn \otimes \vmn}{\vrn},\ p(\vr_n) \right] }\\ 
&\in \mathfrak{P} \left[\mathcal{T} \times 
L^\infty_{\rm weak-(*)} (0,T; \mathcal{M}(Q; R^{d \times d}_{\rm sym} )) \times L^\infty_{\rm weak-(*)} (0,T; \mathcal{M}(Q)) \right].
\end{split}
\]

It follows from the energy bound \eqref{sl4} and Lemma~\ref{Lsl1} that the family $(\Ov{\mathcal{V}}_N)_{N  \geq 0}$ is tight. Now, we apply 
the version of Skorokhod representation theorem due to Jakubowski \cite{Jakub}. Hence there is a subsequence $N_{k}\to\infty$ and a family of random variables 
\[
(\tvr_k, \tvm_k) \in \mathcal{T},\ 1_{\tvr_k > 0} \frac{\tvm_k \otimes \tvm_k}{\tvr_k} \in 
L^\infty_{{\rm weak - (*)}}(0,T; \mathcal{M}(Q; R^{d \times d}_{\rm sym})), \ p(\tvr_k) \in 
L^\infty_{{\rm weak - (*)}}(0,T; \mathcal{M}(Q)) 
\]  
defined on the standard probability space $(\Omega, \mathfrak{B}, \mathcal{P})$ with the law $\Ov{\mathcal{V}}_{N_{k}}$ 
and such that 
\begin{equation} \label{sl10a}
\begin{split}
(\tvr_k, \tvm_k) &\to (\vr, \vm) \ \mbox{in}\ \mathcal{T} \ \prst - \mbox{a.s.}, \\
1_{\tvr_k > 0} \frac{\tvm_k \otimes \tvm_k}{\tvr_k} &\to 
\Ov{ \left[1_{\vr > 0} \frac{\vm \otimes \vm}{\vr} \right] } \ \mbox{weakly-(*) in}\ 
L^\infty_{{\rm weak - (*)}}(0,T; \mathcal{M}(Q; R^{d \times d}_{\rm sym}))  \ \prst - \mbox{a.s.}, \\ 
p(\tvr_k) &\to 
\Ov{p(\vr)} \ \mbox{weakly-(*) in}\ 
L^\infty_{{\rm weak - (*)}}(0,T; \mathcal{M}(Q))  \ \prst - \mbox{a.s.}
\end{split}
\end{equation}

\subsection{Asymptotic limit in the equation of continuity}

In view of \eqref{sl5}, 
\begin{equation} \label{sl10}
\mathcal{V} = \mathcal{L}_{\mathcal{T}}[\vr, \vm],
\end{equation}
where $\mathcal{V}$ is the statistical limit identified in Proposition \ref{Psl1}.
Moreover,
as the law of the new variables coincides with $\Ov{\mathcal{V}}_{N_{k}}$, the continuity equation is satisfied $\prst-$a.s.: 
\begin{equation} \label{sl12bis}
\left[ \intQ{ \tvr_k \varphi } \right]_{t=\tau_1}^{t= \tau_2} = 
\int_{\tau_1}^{\tau_2} \intQ{ \Big[ \tvr_k \partial_t \varphi + \tvm_k \cdot \Grad \varphi \Big] } \dt
\end{equation}
for any $0 < \tau_1 < \tau_2 < T$ and $\varphi \in C^1_c ((0,T) \times Q)$. Thus, letting $k \to \infty$, we get
\begin{equation} \label{sl12}
\left[ \intQ{ \vr \varphi } \right]_{t=\tau_1}^{t= \tau_2} = 
\int_{\tau_1}^{\tau_2} \intQ{ \Big[ \vr \partial_t \varphi + \vm \cdot \Grad \varphi \Big] } \dt
\end{equation}
for any $0 < \tau_1 < \tau_2 < T$ and $\varphi \in C^1_c ((0,T) \times Q)$ $\prst-$a.s. 

\subsection{Asymptotic limit in the momentum equation}

In accordance with \eqref{sl4} we have 
\[
\expe{ \sup_{0 \leq \tau \leq T} \intQ{ E\left(\tvr_k, \tvm_k \Big| \vr_\infty, \vu_\infty \right)(\tau, \cdot) } }
\leq \Ov{\mathcal{E}}.
\]
Here and in the sequel, we denote by $\mathbb{E}$ (i.e. without any subscript) the expectation on the probability space $(\Omega,\mathfrak{B},\mathcal{P})$.
Consequently, it follows from \eqref{sl10a} that 
\begin{equation} \label{R1} 
E\left(\tvr_k, \tvm_k \Big| \vr_\infty, \vu_\infty \right) \to 
\Ov{E\left(\vr, \vm \Big| \vr_\infty, \vu_\infty \right)}\ \mbox{weakly-(*) in}\ 
L^\infty_{\rm weak - (*)}(0,T; \mathcal{M} (Q) ), 
\end{equation}
or, more specifically, 
\[
\int_0^T \psi \intQ{ E\left(\tvr_k, \tvm_k \Big| \vr_\infty, \vu_\infty \right) \varphi } \dt
\to \int_0^T \psi \int_Q \varphi \ \D \Ov{E\left(\vr, \vm \Big| \vr_\infty, \vu_\infty \right)}  \dt
\]
for any $\psi \in L^1(0,T), \ \varphi \in C_c(Q)$ $\prst-$a.s. Moreover, passing to expectations we get, by Fatou's lemma, 
\begin{equation} \label{R2}
\begin{split}
\expe{\int_0^T \psi \int_Q \varphi \ \D \Ov{E\left(\vr, \vm \Big| \vr_\infty, \vu_\infty \right)}  \dt}
&\leq \liminf_{k \to \infty} \expe{ \int_0^T \psi \intQ{ E\left(\tvr_k, \tvm_k \Big| \vr_\infty, \vu_\infty \right) \varphi } \dt}\\
&\leq \Ov{\mathcal{E}} \| \psi \|_{L^1(0,T)} \| \varphi \|_{C(Q)}, \ \psi, \varphi \geq 0.
\end{split}
\end{equation}
Consequently, unlike its components in \eqref{sl10a}, the limit relative energy is a \emph{finite} measure on $(0,T) \times Q$.

Finally, since $(\tvr_{k},\tvm_{k})$ has the law $\mathcal{V}_{N_{k}}$, it satisfies the corresponding momentum equation
$$
\begin{aligned}
\left[ \int_{Q}\tvm_{k}\cdot\bfphi\ \dx\right]_{t=\tau_{1}}^{t=\tau_{2}}=&\int_{\tau_{1}}^{\tau_{2}} \intQ{ \left[ \tvm_k \cdot \partial_t \bfphi + 1_{\tvr_k > 0}
\frac{ \tvm_k \otimes \tvm_k }{\tvr_k} : \Grad \bfphi + p(\tvr_k) \Div \bfphi \right] } \dt \\
&\quad-\sum_{n=1}^{N_{k}}1_{(\tvr_{k},\tvm_{k})=(\vr_{n},\vm_{n})}\ep_{n}\int_{\tau_{1}}^{\tau_{2}} \intQ{\Big[\mathbb{S}(\Grad \vu_{n}):\Grad\bfphi \Big]} \dt
\end{aligned}
$$
for any $0 < \tau_1 < \tau_2 < T$ and $\bfphi \in C^1_c ((0,T) \times Q; R^d)$. We shall prove that the right hand side vanishes as $k\to\infty$. In view of \eqref{sl9} and \eqref{sl4}, we obtain
$$
\frac{1}{N_{k}}\sum_{n=1}^{N_{k}}\left[ \int_{0}^{T}\|\sqrt{\ep_{n}}\mathbb{S}(\Grad \vu_{n})\|_{L^{2}(Q;R^{d\times d})}^{2}\ \dt\right
]\lesssim \overline{\mathcal{E}},
$$
hence
\begin{equation*}
\begin{aligned}
&\mathbb{E}\left[\left| \sum_{n=1}^{N_{k}}1_{(\tvr_{k},\tvm_{k})=(\vr_{n},\vm_{n})}\ep_{n}\int_{0}^{T} \intQ{\Big[\mathbb{S}(\Grad \vu_{n}):\Grad\bfphi \Big]} \dt\right|\right]\\
&\qquad =\mathbb{E}_{\mathcal{V}_{N_{k}}}\left[\left| \sum_{n=1}^{N_{k}}1_{(r,\vc{w})=(\vr_{n},\vm_{n})}\ep_{n}\int_{0}^{T} \intQ{\Big[\mathbb{S}(\Grad \vu_{n}):\Grad\bfphi \Big]} \dt\right|\right]\\
&\qquad =\frac{1}{N_{k}}\sum_{i=1}^{N_{k}}\left[\left| \sum_{n=1}^{N_{k}}1_{(\vr_{i},\vm_{i})=(\vr_{n},\vm_{n})}\ep_{n}\int_{0}^{T} \intQ{\Big[\mathbb{S}(\Grad \vu_{n}):\Grad\bfphi \Big]} \dt\right|\right]\\
&\qquad =\frac{1}{N_{k}}\sum_{i=1}^{N_{k}}\left[\left| \ep_{i}\int_{0}^{T} \intQ{\Big[\mathbb{S}(\Grad \vu_{i}):\Grad\bfphi \Big]} \dt\right|\right]\\
&\qquad \lesssim \|\bfphi\|_{C^{1}((0,T)\times Q)}\left(\frac{1}{N_{k}}\sum_{i=1}^{N_{k}}\ep_{i}\right)^{\frac12}\left(\frac{1}{N_{k}}\sum_{i=1}^{N_{k}} \left[\int_{0}^{T} \|\sqrt{\ep_{i}}\mathbb{S}(\Grad \vu_{i})\|_{L^{2}(Q;R^{d\times d})}^{2}  \dt\right]\right)^{\frac12}\\
&\qquad \lesssim \sqrt{\overline{\mathcal{E}}}\|\bfphi\|_{C^{1}((0,T)\times Q)}\left(\frac{1}{N_{k}}\sum_{i=1}^{N_{k}}\ep_{i}\right)^{\frac12}.
\end{aligned}
\end{equation*}
Since the above right hand side vanishes as $k\to\infty$, we conclude 
\begin{equation} \label{R4}
\int_0^T \left( \intQ{  \vm \cdot \partial_t \bfphi }  + \int_Q \Grad \bfphi : \D \ \Ov{ \left[ 1_{\vr > 0}
\frac{ \vm \otimes \vm }{\vr}  + p(\vr) \mathbb{I} \right]  } \right)  \dt = 0
\ \mbox{for any}\ \bfphi \in C^1_c((0,T) \times Q)
\end{equation}
$\prst-$a.s.

\subsection{Defect measure in the momentum equation}

Following \cite{MarEd} we rewrite equation \eqref{R4} in the form 
\[
\begin{split}
\int_0^T &\intQ{ \left[ \vm \cdot \partial_t \bfphi + 
1_{\vr > 0} \frac{ \vm \otimes \vm }{\vr}: \Grad \bfphi  + p(\vr) \Div \bfphi \right] } \dt \\ &= 
- \int_0^T \int_Q \Grad \bfphi : \left( \D \Ov{ \left[
1_{\vr > 0} \frac{ \vm \otimes \vm }{\vr}  + p(\vr) \mathbb{I} \right]  } - \left[
1_{\vr > 0} \frac{ \vm \otimes \vm }{\vr}  + p(\vr) \mathbb{I} \right] \dx
\right) \dt.
\end{split}
\]
The quantity 
\[
\mathfrak{R} =
\Ov{ \left[ 1_{\vr > 0} \frac{ \vm \otimes \vm }{\vr}  + p(\vr) \mathbb{I} \right]  }  - \left[
1_{\vr > 0} \frac{ \vm \otimes \vm }{\vr}  + p(\vr) \mathbb{I} \right] 
\in \ L^\infty_{\rm weak-(*)} (0,T; \mathcal{M}(Q; R^{d \times d}_{\rm sym})) 
\]
is called \emph{Reynolds defect}. A simple but crucial observation is that the tensor--valued measure $\mathfrak{R}$ 
is positively semi--definite in the sense that
\[
\int_0^T \psi \intQ{
\mathfrak{R} : (\xi \otimes \xi) \varphi } \dt \geq 0 \ \mbox{for any}\ \xi \in R^d 
\ \mbox{and any}\ \psi \in L^1(0,T), \ \varphi \in C_c(Q), \ \psi, \varphi \geq 0 \ \prst -\mbox{a.s.}
\] 
Indeed this follows from convexity of the function
\[
(\vr, \vm) \mapsto \frac{|\vm \cdot \xi|^2}{\vr} + p(\vr)|\xi|^2 \ \mbox{for any}\ \xi \in R^d,
\]
cf. \cite{MarEd} for details.

Our final goal in this section is to show that all components of $\mathfrak{R}$ are finite measures on $Q$. To see this, we compute 
its trace
\[
0 \leq {\rm trace}[\mathfrak{R}] = \Ov{ \frac{|\vm|^2}{\tvr} + d p(\vr) } - \left[ \frac{|\vm|^2}{\vr} + d p(\vr) \right], 
\]
and compare it with the defect of relative energy. More precisely, we first observe
\begin{equation} \label{R5}
\begin{split}
&\Ov{ \frac{1}{2} \frac{|\vm|^2 }{\vr} + P(\vr) } - \left[ \frac{1}{2} \frac{|\vm|^2 }{\vr} + P(\vr) \right]
\leq \max \left\{ \frac{1}{2}, \frac{1}{(\gamma - 1)d} \right\} \left( \Ov{ \frac{|\vm|^2}{\vr} + d p(\vr) } - \left[ \frac{|\vm|^2}{\vr} + d p(\vr) \right] \right)\\
&\qquad\leq \max \left\{ \frac{1}{2}, \frac{1}{(\gamma - 1)d} \right\} 
\max \left\{ 2 , (\gamma - 1)d \right\} \left( \Ov{ \frac{1}{2} \frac{|\vm|^2 }{\vr} + P(\vr) } - \left[ \frac{1}{2} \frac{|\vm|^2 }{\vr} + P(\vr) \right] \right).
\end{split}
\end{equation}
Next, we write 
\begin{equation} \label{R5a}
\begin{split}
&\int_0^T \psi \int_Q \varphi \left( \D \Ov{ \left[ \frac{1}{2} \frac{|\vm|^2 }{\vr} + P(\vr) \right]} - \left[ \frac{1}{2} \frac{|\vm|^2 }{\vr} + P(\vr) \right]\dx \right)\dt \\
 &=
\lim_{k \to \infty} \int_0^T \psi \intQ{ \varphi \left( \left[ \frac{1}{2} \frac{|\tvm_k|^2 }{\tvr_k} + P(\tvr_k) \right]
- \left[ \frac{1}{2} \frac{|\vm|^2 }{\vr} + P(\vr) \right] \right) } \dt \\ 
&= \lim_{k \to \infty} \int_0^T \psi \intQ{ \varphi \left( \frac{1}{2} \frac{|\tvm_k|^2 }{\tvr_k} + 2 \tvm_k \cdot \vu_\infty - 
\frac{1}{2} {\tvr_{k}} |\vu_\infty|^2 
+ P(\tvr_k) - 
P'(\vr_\infty)(\tvr_k - \vr_\infty) - P(\vr_\infty) \right) \!\!} \dt\\ 
&\quad- \int_0^T \psi \intQ{ \varphi \left( \frac{1}{2} \frac{|\vm|^2 }{\vr} + 2 \vm \cdot \vu_\infty - 
\frac{1}{2} {\vr} |\vu_\infty|^2 
+ P(\vr) - 
P'(\vr_\infty)(\vr - \vr_\infty) - P(\vr_\infty) \right) } \dt\\ 
&= \lim_{k \to \infty} \int_0^T \psi \intQ{ \varphi E \left( \tvr_k, \tvm_k \Big| \vr_\infty, \vm_{\infty} \right) } \dt
- \int_0^T \psi \intQ{ \varphi E \left( \vr, \vm \Big| \vr_\infty, \vm_{\infty} \right) } \dt \\ 
&= \int_0^T \psi \int_Q \varphi \ \D  \Ov{ E \left( \vr, \vm \Big| \vr_\infty, \vm_{\infty} \right) }  \dt
- \int_0^T \psi \intQ{ \varphi E \left( \vr, \vm \Big| \vr_\infty, \vm_{\infty} \right) } \dt
\end{split}
\end{equation}
for any $\psi \in L^1(0,T)$, $\varphi \in C_c(Q)$ $\prst-$a.s., where we have used 
\[
\int_0^T \psi \intQ{ \varphi \tvr_k } \dt \to 
\int_0^T \psi \intQ{ \varphi \vr } \dt,\ \int_0^T \psi \intQ{ \varphi \tvm_{k} \cdot \vu_\infty } \dt \to 
\int_0^T \psi \intQ{ \varphi \vm \cdot \vu_\infty } \dt
\] 
$\prst$--a.s. 

Thus, combining \eqref{R2} with \eqref{R5} we obtain the desired conclusion
\begin{equation} \label{R6}
\expe{ \int_0^T \psi \left( \int_Q \varphi \ \D\, {\rm trace}[\mathfrak{R}] \right) \dt } \leq c \Ov{\mathcal{E}} \| \psi \|_{L^1(0,T)} 
\| \varphi \|_{C(Q)}, \ \psi \in L^1(0,T),\ \varphi \in C_c(Q).
\end{equation}

\section{Limit problem}
\label{sc}

In the preceding two sections, we have identified the limit problem in the vanishing viscosity regime as a stochastic  process 
$(\vr, \vm)$ on the probability space $(\Omega, \mathfrak{B}, \prst)$, with paths in the trajectory space $\mathcal{T}$, satisfying $\prst-$a.s.
\begin{equation} \label{sc1}
\int_0^T \intQ{ \Big[ \vr \partial_t \varphi + \vm \cdot \Grad \varphi \Big] } \dt = 0 \ \mbox{for any}\ 
\varphi \in C^1_c((0,T) \times Q),
\end{equation}
\begin{equation} \label{sc2}
\int_0^T \intQ{ \Big[ \vm \cdot \partial_t \bfphi + 1_{\vr > 0} \frac{\vm \otimes \vm}{\vr}: \Grad \bfphi 
+ p(\vr) \Div \bfphi \Big] } \dt = - \int_0^T \int_Q \Grad \bfphi : \, \D \mathfrak{R} \dt 
\end{equation}
for any $\bfphi \in C^1_c((0,T) \times Q; R^d)$, where $\mathfrak{R}$ is the Reynolds stress, 
\[
\mathfrak{R} \in L^\infty_{\rm weak-(*)}(0,T; \mathcal{M}^+(Q; R^{d \times d}_{\rm sym})),
\]
\begin{equation} \label{sc3}
\expe{ \int_0^T \psi \int_Q \varphi \ \D \ {\rm trace}[\mathfrak{R}] \dt } \leq c \Ov{\mathcal{E}}
\| \psi \|_{L^1(0,T)} \| \varphi \|_{C(Q)}.
\end{equation}
In other words, the limit satisfies  pathwise the compressible Euler system in the generalized sense introduced in  \cite{BreFeiHof19B} with the difference that no energy inequality is postulated. Moreover, since the limit  is a stochastic process, it can be regarded as a statistical dissipative solution in the spirit of \cite{FanFei}.

\subsection{Stochastic Euler system} \label{s:ito}

As the next step, we investigate the question, whether the randomness accumulated  along the statistical vanishing viscosity limit can be directly modeled by a stochastic perturbation in the limiting Euler system.
We suppose that the limit process $(\vr, \vm)$ 
is statistically equivalent, in the sense specified in Definition \ref{iD1}, to 
a weak martingale solution $(\tvr, \tvm)$ of the Euler system driven by the noise 
of It\^o's type: 
\begin{equation} \label{mars}
\D \tvr + \Div \tvm \dt = 0,\ 
\D \tvm + \Div \left( \frac{\tvm \otimes \tvm}{\tvr} \right) \dt + \Grad p(\tvr) \dt = \mathbf{F} \D W.
\end{equation}
Here, $W=(W_{k})_{k\geq 1}$ is a cylindrical Wiener process and the diffusion coefficient $\mathbf{F}=(\mathbf{F}_{k})_{k\geq 1}$ is stochastically integrable, that is, progressively measurable and satisfies 
\begin{equation}\label{eq:101}
\expe{ \int_{0}^{T}\sum_{k\geq 1}\|\mathbf{F}_{k}\|_{W^{-\ell,2}(Q;R^{d})}^{2}\dt }<\infty
\end{equation}
where $W^{-\ell,2}(Q;R^{d})$ is  a possibly negative Sobolev space.  A priori, the coefficient $\mathbf{F}$ may  depend on the solution $(\tvr,\tvm)$ provided the above stochastic integrability condition is satisfied. In this setting, the stochastic integral in \eqref{mars} is a martingale, in particular, it has a zero expectation.

Since we only compare certain statistical properties of the two processes $(\vr,\vm)$ and $(\tvr,\tvm)$, they can be possibly defined on different probability spaces. Nevertheless, without loss of generality we assume for notational simplicity that they are both defined on the probability space $(\Omega,\mathfrak{B},\mathcal{P})$ with expectation $\mathbb{E}$.

It holds $\prst-$a.s. 
\begin{equation} \label{mars1}
\int_0^T \intQ{ \Big[ \tvm \cdot  \bfphi \partial_t \psi + \psi 1_{\tvr > 0} \frac{\tvm \otimes \tvm}{\tvr}: \Grad \bfphi  
+ \psi p(\tvr) \Div \bfphi  \Big] } \dt = -\int_0^T \psi \sum_{k \geq 1} \left( \intQ{ \vc{F}_k \cdot \bfphi } \right) \D W_k 
\end{equation}
for any (deterministic) $\psi \in C^1_c(0,T)$, $\bfphi \in C^1_c(Q; R^d)$. 
Thus passing to expectations, we obtain 
\begin{equation} \label{sc4}
\expe{ \int_0^T \intQ{ \Big[ \tvm \cdot  \bfphi \partial_t \psi + \psi 1_{\tvr > 0} \frac{\tvm \otimes \tvm}{\tvr}: \Grad \bfphi  
+ \psi p(\tvr) \Div \bfphi  \Big] } \dt } = 0.
\end{equation}
Similarly, we consider expectation of the limit equation \eqref{sc2} obtaining
\begin{equation} \label{sc5}
\expe{ \int_0^T \intQ{ \Big[ \vm \cdot  \bfphi\partial_t\psi + \psi 1_{\vr > 0} \frac{\vm \otimes \vm}{\vr}: \Grad \bfphi 
+ \psi p(\vr) \Div \bfphi \Big] } \dt } = -  \expe{ \int_0^T \psi\int_Q \Grad \bfphi : \, \D \mathfrak{R} \dt }. 
\end{equation}
Comparing \eqref{sc4}, \eqref{sc5} and using the fact that the random processes are statistically equivalent
and that $p(\vr) = (\gamma - 1) P(\vr)$,  $p(\tvr) = (\gamma -1 ) P(\tvr)$,
we may infer that 
\begin{equation} \label{sc5a}
\begin{split}
&\expe{\int_0^T \psi \left( \int_Q \Grad \bfphi : \D \mathfrak{R} \right) \dt}  \\
&= \expe{ \int_0^T \psi \intQ{   \left( 1_{\tvr > 0} \frac{\tvm \otimes \tvm}{\tvr} 
- 1_{\vr > 0} \frac{\vm \otimes \vm}{\vr}    \right)
: \Grad \bfphi 
 } \dt } 
\end{split}
\end{equation} 
for any $\psi \in C^1_c(0,T)$, $\bfphi \in C^1_c (Q)$.

Our goal is to show that \eqref{sc5a}, together with the fact that 
$(\vr, \vm)$, $(\tvr, \tvm)$ are statistically equivalent, imply $\mathfrak{R} = 0$ $\prst-$a.s. as soon as $Q$ is an exterior domain 
to a convex body $B$.

\subsubsection{Domains exterior to a convex body}
\label{DPT}

The following result  may be of independent interest.

\begin{Proposition} \label{Psc1}
Let $Q = R^d \setminus B$ where $B$ is a bounded set and let $B_R$ be a (closed) ball in $R^d$ of radius $R$ containing $B$. Suppose 
\[
\mathfrak{R} \in L^\infty_{\rm weak-(*)} (0,T; \mathcal{M}^+(Q, R^{d \times d}_{\rm sym}))
\]
satisfies \eqref{sc5a}, where $(\vr, \vm)$, $(\tvr, \tvm)$ are statistically equivalent in the sense of Definition \ref{iD1}.

Then $\mathfrak{R}|_{R^d \setminus B_R} = 0$ $\prst-$a.s.
\end{Proposition}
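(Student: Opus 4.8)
The plan is to exploit the identity \eqref{sc5a} against carefully chosen test functions supported away from the obstacle $B$, combined with the statistical equivalence of $(\vr,\vm)$ and $(\tvr,\tvm)$ from Definition~\ref{iD1}. On $R^d\setminus B_R$ we may freely use compactly supported test functions, and the key point is that the right-hand side of \eqref{sc5a} is the expectation of a quadratic expression in the momentum, which by statistical equivalence (specifically \eqref{SE1} for the momentum and \eqref{SE2} for the kinetic energy) carries no net information distinguishing the two processes once integrated against $\Grad\bfphi$. First I would show that a suitable choice of $\bfphi$ turns $\int_Q \Grad\bfphi:\D\mathfrak{R}$ into the integral of $\mathfrak{R}:(\xi\otimes\xi)$ for an essentially arbitrary direction $\xi$; since $\mathfrak{R}$ is positively semi-definite (as established just before the statement), nonnegativity of each such quantity together with a vanishing expectation of their sum forces $\mathfrak{R}=0$ a.s.\ on the region in question.

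More concretely, I would proceed as follows. Fix a ball $B_R\supset B$ and work in the exterior region $R^d\setminus B_R$, where $C^1_c$ test fields are unconstrained. For a fixed unit vector $\xi\in R^d$ and a scalar cutoff, choose $\bfphi$ of the form $\bfphi(x)=\zeta(x)\xi$ with $\zeta\in C^1_c(R^d\setminus B_R)$, so that $\Grad\bfphi=\xi\otimes\Grad\zeta$ and hence $\Grad\bfphi:\D\mathfrak{R}=\Grad\zeta\cdot(\mathfrak{R}\xi)$. To recover the diagonal quadratic form $\mathfrak{R}:(\xi\otimes\xi)$ I would instead take $\bfphi(x)=\zeta(x)\,x\cdot\xi\,\xi$ type fields, or more efficiently use the classical trick of testing with $\bfphi=\Grad\Phi$ for a scalar $\Phi$, for which $\Grad\bfphi$ is symmetric and $\Grad\bfphi:\D\mathfrak{R}$ picks out $\partial^2_{ij}\Phi\,\mathfrak{R}_{ij}$; choosing $\Phi$ so that its Hessian approximates $\varphi(x)\,\xi\otimes\xi$ on the support region then yields, after passing to expectations in \eqref{sc5a},
\[
\expe{\int_0^T\psi\intQ{\varphi\,\mathfrak{R}:(\xi\otimes\xi)}\dt}
=\expe{\int_0^T\psi\intQ{\varphi\Big(1_{\tvr>0}\tfrac{|\tvm\cdot\xi|^2}{\tvr}-1_{\vr>0}\tfrac{|\vm\cdot\xi|^2}{\vr}\Big)}\dt}.
\]
By statistical equivalence in the sense of \eqref{SE2}, the right-hand side vanishes for every $\psi,\varphi\geq 0$ supported in $(0,T)\times(R^d\setminus B_R)$. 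Since $\mathfrak{R}:(\xi\otimes\xi)\geq 0$ a.s.\ as a measure, this forces $\mathfrak{R}:(\xi\otimes\xi)=0$ a.s.\ on that region; varying $\xi$ over a countable dense set of directions and using symmetry and positive semi-definiteness of $\mathfrak{R}$ gives $\mathfrak{R}|_{R^d\setminus B_R}=0$ $\prst$-a.s.

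The main obstacle, and the step that needs genuine care, is the passage from the test fields $\bfphi$ that \eqref{sc5a} actually permits — compactly supported, divergence-unrestricted — to an expression that isolates $\mathfrak{R}:(\xi\otimes\xi)$ with a \emph{nonnegative} weight, so that positive semi-definiteness can be invoked. The naive choice $\bfphi=\zeta\xi$ only produces $\Grad\zeta\cdot\mathfrak{R}\xi$, whose sign is uncontrolled; one must instead build test fields whose gradient is symmetric and close to $\varphi\,\xi\otimes\xi$, which is where the exterior-domain geometry enters (there is no boundary constraint on $\partial B_R$ from the far side, so such fields exist, but one should check the construction and handle the error terms coming from the fact that a compactly supported $\Phi$ cannot have Hessian exactly equal to a bump times $\xi\otimes\xi$). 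A secondary technical point is ensuring that all the expectations appearing are finite so that Fubini and the equivalence relations \eqref{SE1}--\eqref{SE2} may be applied termwise; this is guaranteed by the energy bound \eqref{sc3} on ${\rm trace}[\mathfrak{R}]$ together with the standing finiteness assumption in Definition~\ref{iD1}. Once the nonnegative quadratic form is in hand, the conclusion is immediate.
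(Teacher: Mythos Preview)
Your proposal has a genuine gap at precisely the point you flag as ``the main obstacle.'' The construction of a compactly supported scalar $\Phi$ whose Hessian is close to $\varphi(x)\,\xi\otimes\xi$ is not a matter of controlling error terms: such a $\Phi$ simply does not exist in dimension $d\geq 2$. If $\nabla_x^2\Phi=\varphi(x)\,\xi\otimes\xi$, then $\Phi$ is affine in the directions orthogonal to $\xi$, forcing $\varphi$ to be independent of those variables, which is incompatible with compact support in $R^d\setminus B_R$. Any honest approximation therefore produces error terms of the same order as the main term, with uncontrolled sign, and the positive semi--definiteness of $\mathfrak{R}$ buys nothing. Put differently, once you observe (correctly, via polarization of the angular energy in \eqref{SE2} over all $x_0$) that the right--hand side of \eqref{sc5a} vanishes for every admissible $\bfphi$, the remaining statement is that a divergence--free, positive semi--definite, finite--trace matrix measure on $Q$ must vanish outside $B_R$. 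This is false locally: there are plenty of such measures on bounded domains. A purely local test--function argument of the kind you sketch cannot succeed.

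The paper's argument is intrinsically global and exploits the unboundedness of $Q$ together with the finite--mass bound \eqref{sc3}. One takes the radial field $\bfphi_L(x)=\chi(|x|/L)\,\Grad F(|x|^2)$ with $F$ smooth, convex, vanishing on $[0,R^2]$; its gradient is $\chi\,\nabla_x^2 F(|x|^2)$ plus cutoff terms, and $\nabla_x^2 F(|x|^2)=4F''(|x|^2)\,x\otimes x+2F'(|x|^2)\,\mathbb{I}$ is \emph{globally} positive semi--definite. Pairing with $\mathfrak{R}$ gives a lower bound $2\int F'(|x|^2)\,\D\,{\rm trace}[\mathfrak{R}]$, the cutoff errors vanish as $L\to\infty$ because ${\rm trace}[\mathfrak{R}]$ has finite total mass, and the right--hand side of \eqref{sc5a} involves only $|\tvm|^2/\tvr$ and $|\tvm\cdot x|^2/\tvr$, exactly the kinetic and angular energies matched by \eqref{SE2}. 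The convex radial potential and the passage $L\to\infty$ are the missing ideas; your local $\xi\otimes\xi$ ansatz should be replaced by them.
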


\begin{proof}

Without loss of generality, we may suppose that the ball is centered at the origin, $B_R = \{ |x| \leq R \}$. 
\tb{Keep in mind that this amounts to replacing $x$ by $x - x_0$ in the following computations.} Consider a smooth 
convex function 
\[
F(Z) = 0 \ \mbox{for}\ 0 \leq Z \leq R^2,\ 0 < F'(Z) \leq \Ov{F} 
\ \mbox{for}\ R^2 < Z < R^2 + 1,\ F'(Z) = \Ov{F} \ \mbox{if}\ Z \geq R^2 + 1, 
\]
together with
a cut--off function 
\begin{equation} \label{cutof}
\chi \in C^\infty_c[0, \infty), \ \chi(Z) = 1 \ \mbox{for}\ Z \leq 1,\ \chi(Z) = 0 \ \mbox{for} \ Z \geq 2.
\end{equation}

Now, we take  
\[
\bfphi_L (x) = \chi \left( \frac{|x|}{L} \right) \Grad F(|x|^2) ,\ \bfphi \in C^1_c(Q),\ L\geq1.
\]
as a test function in \eqref{sc5a}.
The integral on the left--hand side of \eqref{sc5a} reads
\[
\begin{split}
&\expe{\int_0^T \psi \left( \int_Q \Grad \bfphi_L : \D \mathfrak{R} \right) \dt} = \expe{ \int_0^T \psi \left( \int_{R^d \setminus B_R} \chi \left( \frac{|x|}{L} \right) \nabla_{x}^2 F(|x|^2) 
: \D \mathfrak{R} \right) \dt } \\ &\quad+ \frac{2}{L} \expe{ \int_0^T \psi \left( \int_{L \leq |x| \leq 2L} \chi' \left( \frac{|x|}{L} \right) F'(|x|)|x| \left( \frac{x}{|x|} \otimes \frac{x}{|x|} \right)
: \D \mathfrak{R} \right) \dt },
\end{split}
\]
where, in view of \eqref{sc3}, 
\[
\begin{split}
\frac{2}{L} &\expe{ \int_0^T \psi \left( \int_{L \leq |x| \leq 2L} \chi' \left( \frac{|x|}{L} \right) F'(|x|)|x| \left( \frac{x}{|x|} \otimes \frac{x}{|x|} \right)
: \D \mathfrak{R} \right) \dt } \\
&\leq 4 \expe{ \int_0^T \psi \left( \int_{L \leq |x| \leq 2L} \chi' \left( \frac{|x|}{L} \right) F'(|x|) \left( \frac{x}{|x|} \otimes \frac{x}{|x|} \right)
: \D \mathfrak{R} \right) \dt }
 \to 0 \ \mbox{as}\ L \to \infty.
\end{split}
\]
Computing 
\[
\nabla_{x}^2 F(|x|^2) = 2 \nabla_{x} \left( F'(|x|^2) x \right) = 4 F''(|x|^2) (x \otimes x) + 2 F'(|x|^2) \mathbb{I}
\]
and using convexity of $F$ together with the positive semi--definitness of  $\mathfrak{R}$, we obtain 
\begin{equation} \label{concl1}
\begin{split}
\lim_{L \to \infty} \expe{\int_0^T \psi \left( \int_Q \Grad \bfphi_L : \D \mathfrak{R} \right) \dt}&= \lim_{L \to \infty}\expe{ \int_0^T \psi \left( \int_{R^d \setminus B_R} \chi \left( \frac{|x|}{L} \right) \nabla_{x}^2 F(|x|^2) 
: \D \mathfrak{R} \right) \dt }\\ &\geq 2 \expe{ \int_0^T \psi \int_{R^d \setminus B_R} F'(|x|^2 ) \D \ {\rm trace}[\mathfrak{R}]\dt}
\end{split}
\end{equation}
for any $\psi \in C^1_c(0,T)$, $\psi \geq 0$.

Finally, in accordance with \eqref{SE2}, the integral on the right--hand side of \eqref{sc5a} vanishes. Indeed 
we easily compute
\[
\begin{split}
&\expe{ \int_0^T \psi \intQ{   \left( 1_{\tvr > 0} \frac{\tvm \otimes \tvm}{\tvr} 
- 1_{\vr > 0} \frac{\vm \otimes \vm}{\vr} : \Grad \bfphi_L    \right) } \dt } \\ 
&= \expe{ \int_0^T \psi \intQ{ 4 F''(|x|)  \left( 1_{\tvr > 0} \frac{ |\tvm \cdot x|^2 }{\tvr} 
- 1_{\vr > 0} \frac{|\vm \cdot x|^2 }{\vr}     \right) \chi \left( \frac{|x|}{L} \right) }  \dt }\\ 
& + \expe{ \int_0^T \psi \intQ{ 2 F'(|x|)  \left( 1_{\tvr > 0} \frac{ |\tvm|^2 }{\tvr} 
- 1_{\vr > 0} \frac{|\vm |^2 }{\vr}     \right) \chi \left( \frac{|x|}{L} \right) }  \dt }\\
&+ \frac{2}{L} \expe{ \int_0^T \psi  \int_{L \leq |x| \leq 2L} F'(|x|) \frac{1}{|x|^2}  
\left( 1_{\tvr > 0} \frac{ |\tvm \cdot x|^2 }{\tvr} 
- 1_{\vr > 0} \frac{|\vm \cdot x|^2 }{\vr}     \right)
 \chi' \left( \frac{|x|}{L} \right) \dx \dt } = 0. 
\end{split}
\]
\tb{Indeed using statistical equivalence of the kinetic and angular energies \eqref{SE2} we have}
\[
\begin{split}
&\expe{ \int_0^T \psi \intQ{ 1_{\vr > 0} \frac{|\vm \cdot x|}{\vr} \varphi (x) } \dt } \\ &= 
- \expe{ \int_0^T \psi \intQ{ 1_{\vr > 0} (\mathbb{J}_0 \cdot \vm) \cdot \vm \varphi (x) } \dt } + 
\expe{ \int_0^T \psi \intQ{ 1_{\vr > 0} \frac{|\vm|^2 }{\vr} \varphi(x) } \dt } \\
&= 
- \expe{ \int_0^T \psi \intQ{ 1_{\tvr > 0} (\mathbb{J}_0 \cdot \tvm) \cdot \tvm \varphi (x) } \dt } + 
\expe{ \int_0^T \psi \intQ{ 1_{\tvr > 0} \frac{|\tvm|^2 }{\tvr} \varphi(x) } \dt }\\
&= \expe{ \int_0^T \psi \intQ{ 1_{\tvr > 0} \frac{|\tvm \cdot x|}{\vr} \varphi (x) } \dt }
\end{split}
\]
\tb{for any $\varphi \in C_c(Q)$.}

Going back to \eqref{concl1} we may infer that 
\[
\expe{ \int_0^T \psi \int_{R^d \setminus B_R} F'(|x|^2 ) \D \ {\rm trace}[\mathfrak{R}]\dt} = 0
\]
for any $\psi \in C^1_c(0,T)$, $\psi \geq 0$, which yields the desired conclusion as $F'(|x|^2) > 0$ for $|x| > R$.
\end{proof}

\begin{Corollary} \label{Csc1}

Let $Q = R^d \setminus B$ where $B$ is a compact convex set. 
Suppose 
\[
\mathfrak{R} \in L^\infty_{\rm weak-(*)} (0,T; \mathcal{M}^+(Q, R^{d \times d}_{\rm sym}))
\]
satisfies \eqref{sc5a}, where $(\vr, \vm)$, $(\tvr, \tvm)$ are statistically equivalent in the sense of Definition \ref{iD1}.

Then $\mathfrak{R} = 0$ $\prst-$a.s.

\end{Corollary}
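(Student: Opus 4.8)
The plan is to reduce the general convex-obstacle case to the ball case already covered by Proposition \ref{Psc1}, using a translation and a monotonicity/exhaustion argument. The key observation is that a compact convex set $B$ equals the intersection of all closed balls containing it: $B = \bigcap_{B_R \supset B} B_R$. Equivalently, for every point $x_0 \in Q = R^d \setminus B$ there is a closed ball $B_R = B_R(y_0)$ with $B \subset B_R$ and $x_0 \notin B_R$; this is exactly where convexity of $B$ enters, via the separating hyperplane theorem applied to $x_0$ and $B$, which lets one find a supporting half-space and then fatten it to a large ball missing $x_0$ but containing $B$.

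With that geometric fact in hand, the steps are as follows. First I would fix an arbitrary $x_0 \in Q$ and, using convexity of $B$, produce a closed ball $B_R(y_0)$ with $B \subset B_R(y_0)$ and $x_0 \in R^d \setminus B_R(y_0)$. Next I would invoke Proposition \ref{Psc1} with this ball (the proposition is stated for a ball containing $B$, and the parenthetical remark in its proof notes that a general center $y_0$ is handled by replacing $x$ with $x - y_0$ throughout — crucially, Definition \ref{iD1} includes the angular-energy equality \eqref{SE2} for \emph{every} $x_0 \in R^d$, so the argument goes through with the shifted center). This yields $\mathfrak{R}|_{R^d \setminus B_R(y_0)} = 0$ $\prst$-a.s., and in particular $\mathfrak{R}$ vanishes on a neighborhood of $x_0$. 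Since $x_0 \in Q$ was arbitrary and $Q$ is covered by such neighborhoods, and since $\mathfrak{R} \in L^\infty_{\mathrm{weak}-(*)}(0,T;\mathcal{M}^+(Q;R^{d\times d}_{\mathrm{sym}}))$ is a (locally finite, positive semi-definite) measure, a countable-exhaustion argument gives $\mathfrak{R} = 0$ on all of $(0,T) \times Q$ $\prst$-a.s.

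A mild technical point to address is the null-set bookkeeping: Proposition \ref{Psc1} gives, for each fixed ball, an almost-sure statement, and I want a single $\prst$-null set off which $\mathfrak{R} \equiv 0$ on $Q$. This is handled by choosing a countable family of balls $\{B_{R_j}(y_j)\}_{j\in\mathbb{N}}$ whose complements cover $Q$ — possible because $Q$ is a separable metric space and each point has such a ball-complement neighborhood, so one can extract a countable subcover — applying the proposition to each of the countably many balls, and taking the union of the resulting null sets. The main (and really only) obstacle is the elementary convex-geometry lemma that every compact convex $B$ is the intersection of closed balls containing it, or rather its local form: every exterior point can be separated from $B$ by a ball containing $B$. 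This is standard but worth stating cleanly; everything downstream is a routine measure-theoretic exhaustion combined with a direct citation of Proposition \ref{Psc1}.
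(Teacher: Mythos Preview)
Your approach is correct and is essentially identical to the paper's: for each $x_0\in Q$ use convexity of $B$ to find a ball $B_R\supset B$ with $x_0\notin B_R$, apply Proposition~\ref{Psc1}, and cover $Q$ by the resulting neighborhoods. You are more explicit than the paper about why convexity furnishes such a ball, about the shifted center being admissible via the $x_0$-dependence in \eqref{SE2}, and about reducing to a countable family to handle the $\prst$-null sets; the paper simply asserts the existence of $U(x)\subset Q\setminus B_R$ and cites Proposition~\ref{Psc1}.
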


\begin{proof}

As $B$ is convex, any $x \in Q$ possesses an open neighborhood $U(x)$ such that 
\[
U(x) \subset Q \setminus B_R
\]
for some ball $B_R$ containing $B$. By Proposition \ref{Psc1}, 
\[
\mathfrak{R}|_{U(x)} = 0
\]
$\mathcal{P}-$a.s.
\end{proof}

\subsection{Convergence} 

Going back to relation \eqref{R5a}, 
we also obtain 
\begin{equation} \label{sc6}
\Ov{ E \left(\vr, \vm \Big| \vr_\infty , \vm_\infty \right) } = E \left(\vr, \vm \Big| \vr_\infty , \vm_\infty \right)
\ \prst - \mbox{a.s.}
\end{equation}
As shown in \cite{MarEd}, relation \eqref{sc6} implies local strong convergence for the Skorokhod representation,
more specifically, 
\begin{equation} \label{sc8}
 \left\| \tvr_k - \vr \right\|_{L^\gamma((0,T) \times K)}^\gamma + 
\left\| 1_{\tvr_k > 0} \frac{\tvm_k}{\sqrt{\tvr_k}} - 1_{\vr > 0} \frac{\vm}{\sqrt{\vr}} \right\|_{L^2((0,T) \times K; R^d)}^2 \to 0 
\end{equation}
for any compact $ K \subset Q$ $\mathcal{P}$--a.s.

Finally, we translate the convergence result \eqref{sc8} in terms of the original sequence $(\vrn, \vm_n)_{n=1}^\infty$.
Consider $b \in C_c(R^{d + 1})$. It follows from \eqref{sc8} that 
\[
\int_0^T \intQ{ \frac{1}{N_k} \sum_{k=1}^{N_k}  b(\vr_n, \vm_n) \varphi } \dt = 
\expe{ \int_0^T \intQ{ b(\tvr_k, \tvm_k) \varphi } \dt } \to \expe{ \int_0^T \intQ{ b(\vr, \vm) \varphi } \dt }
\]
for any $\varphi \in \DC((0,T) \times Q)$. This can be interpreted as
\[
\frac{1}{N_k} \sum_{k=1}^{N_k}  b(\vr_n, \vm_n) \to \mathbb{E}\left[ b(\vr, \vm) \right] = \mathbb{E}_{\mathcal{V}}\left[ b(r, \vc{w}) \right]
\ \mbox{weakly-(*) in}\ L^\infty((0,T) \times Q) \ \mbox{for}\ b \in C_c(R^{d + 1}).
\] 
Moreover, identifying 
\[
\frac{1}{N_k} \sum_{k=1}^{N_k}  b(\vr_n, \vm_n) = \mathbb{E}\left[ b(\tvr_k, \tvm_k) \right], 
\]
we compute
\[
\left\| 
\mathbb{E}\left[ b(\tvr_k, \tvm_k) \right]- \mathbb{E}\left[b(\vr, \vm) \right] \right\|_{L^1((0,T) \times K)} 
\leq \expe{ \left\| b(\tvr_k, \tvm_k) - b(\vr, \vm) \right\|_{L^1((0,T) \times K)} } \to 0 
\]
for any compact $K \subset Q$.
In particular, modulo a subsequence $(N_k)_{k \geq 1}$, the sequence $(\vrn, \vmn)_{n=1}^\infty$ is S--convergent in the sense of \cite{Fei2020A} as a consequence of Theorem 2.4 in \cite{Fei2020A}.

We have proved the following result. 

\begin{Theorem}[Strong statistical limit] \label{Tsc1}

Suppose that $Q = R^d \setminus B$, where $B$ is \tb{a convex compact} set. Let $(\vr_n, \vm_n)_{n=1}^\infty$ be a sequence of weak solutions to the Navier--Stokes system in the sense of Definition 
\ref{Dns1} with the viscosity coefficients 
\[
\mu_n = \ep_n \mu, \ \lambda_n = \ep_n \lambda , \ \mu > 0, \ \lambda \geq 0, \ \ep_n \!\searrow 0, 
\]
and satisfying the total energy bound \eqref{sl4}. Let $(\vr, \vm)$ be the Skorokhod representation of the 
limit $\mathcal{V}$ identified in Proposition \ref{Psl1}. Suppose that $(\vr, \vm)$  
is statistically equivalent to 
a  solution $(\tvr, \tvm)$ of the 
stochastic Euler system \eqref{mars}, driven by a stochastic forcing $\mathbf{F} \D W$ of It\^ o's type satisfying \eqref{eq:101}.

Then 
\begin{itemize}
\item $(\vr, \vm)$ is a (weak) statistical solution of the deterministic compressible Euler system.
\item The sequence $(\vr_n, \vm_n)_{n=1}^\infty$, modulo a subsequence $(N_k)_{k \geq 1}$, is S--convergent in the sense of 
\cite{Fei2020A}. Specifically, there is a sequence $N_k \to \infty$ such that 
\begin{equation} \label{sc14bis}
\frac{1}{N_k} \sum_{k=1}^{N_k} b(\vr_n, \vm_n) \to \mathbb{E}_{\mathcal{V}} \left[b(r, \vc{w}) \right]
\ \mbox{(strongly) in}\ L^1((0,T) \times K) \ \mbox{as}\ k \to \infty
\end{equation}
for any compact $K \subset Q$ and any $b\in C_c(R^{d + 1})$.

\end{itemize}

\end{Theorem}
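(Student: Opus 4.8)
The plan is to assemble Theorem~\ref{Tsc1} by collecting the three structural facts already established in the preceding subsections, and then to repackage the convergence statement in terms of the original viscous sequence rather than the Skorokhod representation. First I would invoke Corollary~\ref{Csc1}: since $B$ is a convex compact set and $Q = R^d\setminus B$, the hypothesis that $(\vr,\vm)$ is statistically equivalent to a solution $(\tvr,\tvm)$ of the stochastic Euler system~\eqref{mars} forces, via the identity~\eqref{sc5a}, that the Reynolds defect vanishes, $\mathfrak{R} = 0$ $\prst$-a.s. Substituting $\mathfrak{R}=0$ into~\eqref{sc2} and keeping~\eqref{sc1}, the process $(\vr,\vm)$ satisfies pathwise the weak formulation of the \emph{deterministic} compressible Euler system; since $(\vr,\vm)$ is a stochastic process whose law is $\mathcal{V}$, it is by definition a weak statistical solution of the deterministic Euler system. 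This yields the first bullet.

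For the second bullet, the key is that $\mathfrak{R}=0$ feeds back, through the chain of equalities in~\eqref{R5a} together with the trace comparison~\eqref{R5}, into the statement that the relative energy defect also vanishes, i.e.~\eqref{sc6}: $\Ov{E(\vr,\vm\mid\vr_\infty,\vm_\infty)} = E(\vr,\vm\mid\vr_\infty,\vm_\infty)$ $\prst$-a.s. Here I would be slightly careful: the inequality~\eqref{R5} controls the relative-energy defect by the trace of $\mathfrak{R}$, so $\mathfrak{R}=0$ gives that the former is a nonnegative measure that is dominated by zero, hence zero. Then I would quote the convexity/strong-convergence argument of~\cite{MarEd}: vanishing of the relative energy defect upgrades the $\prst$-a.s. narrow convergence $(\tvr_k,\tvm_k)\to(\vr,\vm)$ in $\mathcal{T}$ to the local strong convergence~\eqref{sc8} in $L^\gamma$ for the density and $L^2$ for $\vc{m}/\sqrt{\vr}$. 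From~\eqref{sc8} one deduces, for any $b\in C_c(R^{d+1})$ and compact $K\subset Q$, that $b(\tvr_k,\tvm_k)\to b(\vr,\vm)$ strongly in $L^1((0,T)\times K)$ $\prst$-a.s., and since the Skorokhod variables have law $\Ov{\mathcal{V}}_{N_k}$ so that $\mathbb{E}[b(\tvr_k,\tvm_k)] = \tfrac{1}{N_k}\sum_{n=1}^{N_k} b(\vr_n,\vm_n)$, passing the expectation through a dominated-convergence estimate (the test function $b$ is bounded, $K$ has finite measure) gives~\eqref{sc14bis}; invoking Theorem~2.4 of~\cite{Fei2020A} then identifies this as S-convergence to the parametrized measure generated by $\mathcal{V}$.

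The main obstacle, in my view, is not any single step in isolation but the bookkeeping of \emph{which} probability space and \emph{which} law is being used at each stage — in particular the subtle point flagged in Remark~\ref{r:1.3}, namely that statistical equivalence in the weak sense of Definition~\ref{iD1} is \emph{exactly} strong enough to kill $\mathfrak{R}$ via~\eqref{sc5a} (the kinetic + angular energy moments in~\eqref{SE2} are what make the right-hand side of~\eqref{sc5a} vanish against the radial test fields $\bfphi_L$), while being \emph{too weak} to force equality in law. One must also be careful that the stochastic integral in~\eqref{mars} has zero expectation — this is where the stochastic integrability condition~\eqref{eq:101} is used, to ensure the integral is a genuine martingale — so that~\eqref{sc4} holds and the comparison~\eqref{sc4}--\eqref{sc5} is legitimate. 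Once these points are handled, the rest is assembly: Corollary~\ref{Csc1} for bullet one, \eqref{sc6} plus \cite{MarEd} plus \cite{Fei2020A} for bullet two.
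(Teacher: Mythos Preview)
Your proposal is correct and follows essentially the same route as the paper: invoke Corollary~\ref{Csc1} to obtain $\mathfrak{R}=0$, feed this through \eqref{R5a}--\eqref{R5} to get \eqref{sc6}, quote \cite{MarEd} for the local strong convergence \eqref{sc8}, then identify $\tfrac{1}{N_k}\sum_{n=1}^{N_k} b(\vr_n,\vm_n)=\mathbb{E}[b(\tvr_k,\tvm_k)]$ and pass to the limit via the estimate $\|\mathbb{E}[b(\tvr_k,\tvm_k)]-\mathbb{E}[b(\vr,\vm)]\|_{L^1}\le \mathbb{E}\|b(\tvr_k,\tvm_k)-b(\vr,\vm)\|_{L^1}\to 0$ before invoking Theorem~2.4 of \cite{Fei2020A}. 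Your commentary on the role of \eqref{eq:101} (martingale property of the It\^o integral so that \eqref{sc4} holds) and of the angular-energy clause in \eqref{SE2} (needed in Proposition~\ref{Psc1}) is accurate and matches the paper's logic.
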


\subsection{Drift force of Stratonovich type}

\label{drift}

Revisiting the proof of Theorem~\ref{Tsc1}, we may observe that the only used property of the It\^o integral is its vanishing expectation. In other words, the same result remains valid if we replace the stochastic integral by an arbitrary random variable with zero expectation. A natural question is therefore whether our result applies to other random perturbations with generally non--zero expected value.
Our goal is to extend Theorem \ref{Tsc1} to a larger class of driving forces including, in particular, a Stratonovich type drift term, which is also widely used in the literature. In particular, a physical justification of a noise of this form in the context of fluid dynamics can be found in \cite{MiRo}. In \cite{FL19}, it was even proved that a transport noise of Stratonovich type provides regularization of the incompressible Navier--Stokes system in vorticity form.

For technical reasons, we restrict ourselves to the space dimension $d=2$. 
Unfortunately, we are able to show the result only in the absence of the obstacle, meaning $Q = R^2$.

We suppose that the limit process $(\vr, \vm)$ is statistically equivalent to a solution $(\tvr, \tvm)$ of the problem 
\begin{equation} \label{strat}
\D \tvr + \Div \tvm \dt = 0,\ 
\D \tvm + \Div \left( \frac{\tvm \otimes \tvm}{\tvr} \right) \dt + \Grad p(\tvr) \dt = (\sigma \cdot \Grad) \tvm \circ \D W_1 + \mathbf{F} \ \D W_2,
\end{equation}
where $\sigma \in R^2$ is a constant vector.
In particular, the weak formulation of the momentum equation reads
\begin{equation} \label{strat2}
\begin{split}
&\int_0^T \intQp{ \Big[ \tvm \cdot  \bfphi \partial_t \psi + \psi 1_{\tvr > 0}\frac{\tvm \otimes \tvm}{\tvr}: \Grad \bfphi  
+ \psi p(\tvr) \Div \bfphi  \Big] } \dt\\
 &\qquad= -\int_0^T \psi \sum_{k \geq 1} \left( \intQp{ \vc{F}_k \cdot \bfphi } \right) \D W_{2,k}
+ \int_0^T \psi \left( \intQp{ \tvm\cdot \Grad (\bfphi \otimes \sigma) } \right) \circ \D W_1. 
\end{split}
\end{equation}
for any $\psi \in C^1_c(0,T)$,  \tb{$\bfphi \in C^1_c(R^2; R^2)$}.

Using the It\^o--Stratonovich correction formula we can write the Stratonovich integral in \eqref{strat2} as a martingale plus the correction term
\[
-\frac12\int_0^T \psi \intQp{ \tvm \cdot [ (\sigma \otimes \sigma) : \nabla_x^2] \bfphi } \dt,
\] 
provided
\begin{equation}\label{eq:p1}
\expe{\int_{0}^{T}\left(\psi \int_{R^2}\tvm\cdot\nabla_{x}(\bfphi\otimes\sigma)\ \dx \right)^{2}\dt}<\infty.
\end{equation}
We take this stochastic integrability condition as an additional assumption. 
Obviously, the correction term can be written in the form 
\[
-\frac12\int_0^T \psi \intQp{ \tvm \cdot [ (\sigma \otimes \sigma) : \nabla_x^2] \bfphi } \dt 
= -\frac12\int_0^T \psi \intQp{ (\tvm - \vm_\infty) \cdot [ (\sigma \otimes \sigma) : \nabla_x^2] \bfphi } \dt,
\] 
with $\vm_\infty = \vr_\infty \vu_\infty$.

Similarly to Section \ref{s:ito}, passing to expectations in \eqref{strat2} we obtain 
\begin{equation} \label{sc10}
\begin{split}
\expe{\int_0^T \psi \left( \int_{R^2} \Grad \bfphi : \D \mathfrak{R} \right) \dt} &= \expe{\int_0^T \psi \tb{ \intQp{ (\tvm 
- \vm_\infty) \cdot [ (\sigma \otimes \sigma) : \nabla_x^2] \bfphi } \dt}
}\\
&+\expe{ \int_0^T \psi \intQp{   \left( 1_{\tvr > 0} \frac{\tvm \otimes \tvm}{\tvr} 
- 1_{\vr > 0} \frac{\vm \otimes \vm}{\vr}    \right)
: \Grad \bfphi 
 } \dt } 
\\
\mbox{for any}\ \psi &\in C^1_c(0,T), \ \tb{ \bfphi \in C^1_c (R^2; R^2)}.
\end{split}
\end{equation}

The following steps are inspired by Chae \cite{ChaeD}. 
Similarly to the proof of Proposition \ref{Psc1}, we consider the test function  
\begin{equation} \label{testF}
\bfphi (x) = \chi \left( \frac{|x|}{L} \right) x.  
\end{equation}
where $\chi$ is the cut-off function introduced in \eqref{cutof}. 
Accordingly, 
\[
\Grad \bfphi = \frac{1}{L} \chi' \left( \frac{|x|}{L} \right) \frac{x \otimes x}{|x|} +  
 \chi \left( \frac{|x|}{L} \right) \mathbb{I}, 
\]
\[
\nabla^2_x \bfphi = \frac{1}{L^2} \chi'' \left( \frac{|x|}{L} \right) \frac{x \otimes x}{|x|^2}  
x  - \frac{1}{L}\chi' \left( \frac{|x|}{L} \right)\frac{x \otimes x}{|x|^2} \frac{x}{|x|}     +  \frac{1}{L} \chi' \left( \frac{|x|}{L} \right) \mathbb{I} \frac{x}{|x|} + \frac{1}{L} \chi' \left( \frac{|x|}{L} \right) \frac{1}{|x|} \Grad (x \otimes x). 
\]
Exactly as in the proof of Proposition \ref{Psc1} we deduce
\[
\expe{\int_0^T \psi \left( \int_{R^2} \ \D \ {\rm trace} [\mathfrak{R}] \right) \dt} \leq  
\lim_{L \to \infty} \frac{1}{L} \expe{ \int_0^T \int_{L \leq |x| \leq 2L} \tb{|\tvm - \vm_\infty |} \dx \dt }
\]
Consequently, we obtain the same conclusion as in Proposition \ref{Psc1} if we show 
\begin{equation}\label{eq:ll}
\lim_{L \to \infty} \frac{1}{L} \expe{ \int_0^T \int_{L \leq |x| \leq 2L} \tb{|\tvm - \vm_\infty |} \dx \dt } = 0. 
\end{equation}

We check easily by direct manipulation that 
\[
\begin{split}
1_{ \left\{ \frac{1}{2} \vr_\infty \leq \tvr \leq 2 \vr_\infty \right\} }
\tb{ |\tvm - \vm_\infty|^2} &\aleq E\left( \tvr, \tvm \Big| \vr_\infty, \vu_\infty \right) ,\\
 1_{ \left\{ \tvr < \frac{1}{2} \vr_\infty \ \rm{or}\ \tvr > 2 \vr_\infty \right\} }
\tb{|\tvm - \vm_\infty|^{\frac{2 \gamma}{\gamma + 1}} } &\aleq E\left( \tvr, \tvm \Big| \vr_\infty, \vu_\infty \right) .
\end{split}
\]
Denoting 
\[
\tvm_1 = 1_{ \left\{ \frac{1}{2} \vr_\infty \leq \tvr \leq 2 \vr_\infty \right\} } \tb{ (\tvm - \vm_\infty) } , 
\ \tvm_2 = 1_{ \left\{ \tvr < \frac{1}{2} \vr_\infty \ \rm{or}\ \tvr > 2 \vr_\infty \right\} } \tb{ (\tvm - \vm_\infty) }, 
\]
we get, by H\" older's inequality,  
\[
\frac{1}{L} \expe{ \int_0^T \int_{L \leq |x| \leq 2L} |\tvm_1 | \dx \dt } 
\aleq L^{\frac{d - 2}{2}} \expe{  \int_0^T \left( \int_{L \leq |x| \leq 2L} E\left( \tvr, \tvm \Big| \vr_\infty, \vu_\infty \right) 
\dx \right)^{\frac{1}{2}} \dt },
\]
and 
\[
\frac{1}{L} \expe{ \int_0^T \int_{L \leq |x| \leq 2L} |\tvm_2 | \dx \dt } 
\aleq L^{d \frac{\gamma - 1}{2 \gamma} - 1} \expe{  \int_0^T \left( \int_{L \leq |x| \leq 2L} E\left( \tvr, \tvm \Big| \vr_\infty, \vu_\infty \right) 
\dx \right)^{\frac{\gamma + 1}{2\gamma}} \dt }.
\]
However, as $(\tvr, \tvm)$ is statistically equivalent to $(\vr, \vm)$ we have 
\[
\expe{ \int_0^T \int_{L \leq |x| \leq 2L} E\left( \tvr, \tvm \Big| \vr_\infty, \vu_\infty \right) 
\dx \dt } 
= \expe{ \int_0^T \int_{L \leq |x| \leq 2L} E\left( \vr, \vm \Big| \vr_\infty, \vu_\infty \right) 
\dx \dt }
\]
In particular, if $d = 2$, both integrals vanish in the asymptotic limit $L \to \infty$.

We have proved the following result. 

\begin{Theorem} \label{Tsc2}

\tb{Let $Q = R^2$}. Suppose that $(\vr_n, \vm_n)_{n=1}^\infty$ is a sequence of weak solutions to the Navier--Stokes system in the sense of Definition 
\ref{Dns1} with the viscosity coefficients 
\[
\mu_n = \ep_n \mu, \ \lambda_n = \ep_n \lambda , \ \mu > 0, \ \lambda \geq 0, \ \ep_n \!\searrow 0, 
\]
and satisfying the total energy bound \eqref{sl4}. Let $(\vr, \vm)$ be the Skorokhod representation of the 
limit $\mathcal{V}$ identified in Proposition \ref{Psl1}. Suppose that $(\vr, \vm)$ is statistically equivalent to a weak solution 
$(\tvr, \tvm)$
of the 
stochastic Euler system \eqref{strat} driven by a stochastic forcing  
\[
(\sigma \cdot \Grad) \tvm \, \circ \D W_1 + \mathbf{F} \,\D W_2
\]
and satisfying \eqref{eq:p1} for any $\psi\in C^1_c(0,T),$ \tb{ $\bfphi \in C^1_c (R^2; R^2)$}.

Then 
\begin{itemize}
\item $(\vr, \vm)$ is a (weak) statistical solution of the deterministic compressible Euler system.
\item The sequence $(\vr_n, \vm_n)_{n=1}^\infty$, modulo a subsequence $(N_k)_{k \geq 1}$, is S--convergent in the sense of 
\cite{Fei2020A}. Specifically, there is a sequence $N_k \to \infty$ such that 
\[
\frac{1}{N_k} \sum_{k=1}^{N_k} b(\vr_n, \vm_n) \to \mathbb{E}_{\mathcal{V}}\left[ b(r, \vc{w}) \right]
\ \mbox{(strongly) in}\ L^1((0,T) \times K) \ \mbox{as}\ k \to \infty
\]
for any compact \tb{$K \subset R^2$} and any  $b\in C_c(R^{3})$.

\end{itemize}

\end{Theorem}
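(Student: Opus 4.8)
The plan is to complete the argument begun in Section~\ref{drift} and then invoke the conclusions of Theorem~\ref{Tsc1} verbatim. The only genuinely new point is to show that the Reynolds defect $\mathfrak{R}$ attached in Section~\ref{R} to the limit process vanishes $\prst$--a.s.; once that is established, \eqref{sc2} reduces to the pathwise weak momentum balance of the deterministic compressible Euler system, which is the first bullet, while $\mathrm{trace}[\mathfrak{R}]=0$ inserted into \eqref{R5}--\eqref{R5a} forces the relative--energy defect to vanish, i.e. \eqref{sc6}; the argument of \cite{MarEd} then gives the local strong convergence \eqref{sc8} of the Skorokhod representation, and the translation to C\`esaro averages carried out in the subsection preceding Theorem~\ref{Tsc1} yields the claimed S--convergence, which is the second bullet.

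So the core is $\mathfrak{R}=0$. First I would take expectations in the weak momentum formulation \eqref{strat2} for the statistically equivalent process $(\tvr,\tvm)$ and use the It\^o--Stratonovich correction (legitimate under \eqref{eq:p1}) to split the transport term into a martingale plus the drift $-\tfrac12\int_0^T\psi\intQp{(\tvm-\vm_\infty)\cdot[(\sigma\otimes\sigma):\nabla_x^2]\bfphi}\dt$; comparing with the expectation of \eqref{sc2} and invoking equality of expectations of density, momentum, and of the kinetic, internal and angular energy from Definition~\ref{iD1} produces the identity \eqref{sc10}. Then I test \eqref{sc10} with $\bfphi=\chi(|x|/L)x$ as in \eqref{testF}. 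On the left, $\Grad\bfphi=\chi(|x|/L)\mathbb{I}$ plus a term supported on $L\le|x|\le2L$, so positive semi--definiteness of $\mathfrak{R}$ together with the trace bound \eqref{sc3} lets me pass to $L\to\infty$ and recover $\expe{\int_0^T\psi\int_{R^2}\D\,{\rm trace}[\mathfrak{R}]\dt}$. On the right, the kinetic--energy difference tested against $\mathbb{I}$ and against $\tfrac{x\otimes x}{|x|}$ drops out, since $\tfrac{|\vm|^2}{\vr}$ and $\tfrac{|\vm\cdot x|^2}{\vr}$ (the latter via $\mathbb{J}_0$) are precisely among the quantities with equal expectations in \eqref{SE2}, exactly as in Proposition~\ref{Psc1}; the correction drift survives, but it is supported on the annulus $L\le|x|\le2L$ where it is $O(1/L)$, because $\nabla_x^2\bfphi$ vanishes where $\chi\equiv1$. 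Hence I am left with $\expe{\int_0^T\psi\int_{R^2}\D\,{\rm trace}[\mathfrak{R}]\dt}\aleq\liminf_{L\to\infty}\tfrac1L\expe{\int_0^T\int_{L\le|x|\le2L}|\tvm-\vm_\infty|\,\dx\,\dt}$, i.e. \eqref{eq:ll} is what remains.

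To settle \eqref{eq:ll} I would split $\tvm-\vm_\infty=\tvm_1+\tvm_2$ according to whether $\tvr$ is comparable to $\vr_\infty$, so that $|\tvm_1|^2\aleq E(\tvr,\tvm\,|\,\vr_\infty,\vu_\infty)$ and $|\tvm_2|^{2\gamma/(\gamma+1)}\aleq E(\tvr,\tvm\,|\,\vr_\infty,\vu_\infty)$, and apply H\"older on the annulus of area $\sim L^d$; this produces the prefactors $L^{(d-2)/2}$ and $L^{d(\gamma-1)/(2\gamma)-1}$ in front of (sub)integrals of the relative energy, and both exponents are $\le0$ exactly when $d=2$. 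Finally, by statistical equivalence the annular expectation of $E(\tvr,\tvm\,|\,\vr_\infty,\vu_\infty)$ coincides with that of $E(\vr,\vm\,|\,\vr_\infty,\vu_\infty)$, and since \eqref{R2} already bounds $\expe{\int_0^T\int_{R^2}E(\vr,\vm\,|\,\vr_\infty,\vu_\infty)\,\dx\,\dt}$, these tails go to $0$ as $L\to\infty$; a Jensen/Cauchy--Schwarz step absorbs the sublinear powers of $E$. This gives $\mathrm{trace}[\mathfrak{R}]=0$ and, by positive semi--definiteness, $\mathfrak{R}=0$ $\prst$--a.s.

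The main obstacle — and the reason the statement is confined to $d=2$ and $Q=R^2$ — is exactly the annular estimate \eqref{eq:ll}: the Stratonovich term leaves a genuinely non--zero drift that must be controlled at spatial infinity, and the H\"older scaling only closes in two dimensions; moreover the test field $\bfphi=\chi(|x|/L)x$, chosen so that $\Grad\bfphi$ is essentially the identity everywhere and the extraction of $\mathrm{trace}[\mathfrak{R}]$ is immediate, is admissible only in the absence of an obstacle, since in an exterior domain it would have to vanish on $\partial B$ and the cancellation of the correction drift against the angular--energy equivalence would be lost.
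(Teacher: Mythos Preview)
Your proposal is correct and follows the paper's own argument essentially step for step: the same test function $\bfphi=\chi(|x|/L)\,x$, the same use of statistical equivalence to kill the kinetic/angular contributions on the right of \eqref{sc10}, the same density--based splitting $\tvm-\vm_\infty=\tvm_1+\tvm_2$ with the H\"older exponents $L^{(d-2)/2}$ and $L^{d(\gamma-1)/(2\gamma)-1}$, and the same reduction of the annular relative--energy expectation to that of $(\vr,\vm)$. Your explicit mention of the Jensen/Cauchy--Schwarz step to pass from $\expe{(\int E)^{1/2}}$ to $(\expe{\int E})^{1/2}$ is a point the paper leaves implicit, and your explanation of why the obstacle must be absent (the test field $\chi(|x|/L)\,x$ is inadmissible otherwise) matches the paper's restriction to $Q=R^2$.
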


Similarly to the It\^o case, we may extend the validity of Theorem~\ref{Tsc2} to random perturbations of the form
$
M +N
$, where $M$ has zero expectation and $N$ satisfies a suitable assumption so that the corresponding term  vanishes in the required sense as it was the case for the It\^o--Stratonovich correction through \eqref{eq:ll}.

\begin{Remark} \label{rr2}

\tb{
A short inspection of formula \eqref{sc10} and the specific form of the test function \eqref{testF} reveals that 
the hypothesis concerning statistical equivalence of the angular energies can be relaxed in \eqref{SE2}.
}

\end{Remark}

\begin{Remark} \label{rr1}

\tb{
It is interesting to note that the same technique can be used to eliminate system with ``effective'' viscosity 
(cf. Davidson \cite[Chapter 4]{DAVI}), }
\[
\partial_t \tvr + \Div \tvm = 0,\ 
\partial_t \tvm + \Div \left( \frac{\tvm \otimes \tvm}{\tvr} \right)  + \Grad p(\tvr) \dt = \mu_{\rm eff} \Del \tvu,\ 
\]
\tb{
as the turbulent limit provided the velocity field $\tvu$ enjoys certain integrability properties.}

\end{Remark}

\subsubsection{Three dimensional setting}

We observe that the proof of Theorem~\ref{Tsc2} goes through also in three spatial dimensions provided certain stronger assumptions are postulated. \tb{We assume that the far field density vanishes $\vr_{\infty}=0$, and that the adiabatic constant  
belongs to the physically relevant range $\gamma\in (1,3]$.} In that case, the term $\widetilde{\vm}_{1}$ does not appear and the bound for $\widetilde{\vm}_{2}$ indeed vanishes as $L\to\infty$, which yields the claim of Theorem~\ref{Tsc2}.

\tb{ We can also allow for more complicated spatial domains provided suitable boundary conditions for both the approximate and the 
limit system are considered.
As the spatial domain we may take for instance a domain exterior to a cone,}
\begin{equation*}
\tb{Q = R^3 \setminus Q},\ 
Q = \left\{ (x_1, x_2,x_{3}) \ \Big| x_{3}\geq 0,\  x^{2}_{3}\leq \lambda^{2}(x_{1}^{2}+x_{2}^{2}) \right\},\ \lambda>0.
\end{equation*}
\tb{We suppose that the solutions of the Navier--Stokes system satisfy the complete slip condition,} 
\[
\vu \cdot \vc{n} |_{\partial Q} = 0, \ (\mathbb{S} \cdot \vc{n}) \times \vc{n}|_{\partial Q} = 0.
\]
\tb{Accordingly, the limit Euler system is endowed with the standard impermeability condition} 
\[
\vm \cdot \vc{n}|_{\partial Q} = 0.
\] 
Under these circumstances, the function $\bfphi$ introduced in \eqref{testF} is still an admissible test function for both 
the Navier--Stokes and the Euler system, and the arguments of Section \ref{drift} can be used as soon as $\vr_\infty = 0$, 
$\vu_\infty = 0$. Clearly, such a choice of boundary conditions eliminates the turbulent boundary layer. The same can be done also in two spatial dimensions where the assumption $\vr_{\infty}=0$ is not necessary.

\subsection{Deterministic limit}

As our final goal, we identify the asymptotic limit in the situation of Theorem~\ref{Tsc1} and under the additional assumption that  the barycenter
\[
(\overline\vr, \overline\vm) = \mathbb{E}_{\mathcal{V}}\left[ (r, \vc{w}) \right]\]
is a weak solution of the Euler system.

Consider the averages 
\[
\frac{1}{N_k} \sum_{n=1}^{N_k} \vr_n = \Ov{\vr}_k,\ \frac{1}{N_k} \sum_{n=1}^{N_k} \vm_n = \Ov{\vm}_k. 
\]
Due to convexity of the relative energy, we have 
\[
\intQ{ E \left(\Ov{\vr}_k, \Ov{\vm}_k \Big| \vr_\infty, \vu_\infty \right) } \leq 
\frac{1}{N_k} \sum_{n=1}^{N_k} \intQ{ E \left(\vr_n, \vm_n \Big| \vr_\infty, \vu_\infty \right) }, 
\]
and the uniform bound \eqref{sl4} yields 
\begin{equation} \label{sc14}
\sup_{0 \leq t \leq \tau} \intQ{E \left(\Ov{\vr}_k, \Ov{\vm}_k \Big| \vr_\infty, \vu_\infty \right) } \leq \Ov{\mathcal{E}}. 
\end{equation}
Thus it follows from \eqref{sc14bis}  that 
\[
\Ov{\vr}_k \to \overline\vr \ \mbox{in}\ L^p_{\rm loc}([0,T] \times Q),\ 1 \leq p < \gamma, \  
\Ov{\vm}_k \to \overline\vm \ \mbox{in}\ L^q_{\rm loc}([0,T] \times Q; R^d),\ 1 \leq q < \frac{2 \gamma}{\gamma + 1}.
\]

As $(\vr_n, \vm_n)$ satisfy the momentum equation, we get 
\[
\begin{split}
\int_0^T &\intQ{ \Ov{\vm}_k \cdot \partial_t \bfphi + 1_{\Ov{\vr}_k > 0} \frac{\Ov{\vm}_k \otimes \Ov{\vm}_k }{\Ov{\vr}_k} 
: \Grad \bfphi + p(\Ov{\vr}_k) \Div \bfphi } \dt \\ 
&= \frac{1}{N_k} \sum_{n=1}^{N_k} \ep_n \int_0^T \intQ{ \mathbb{S}(\Grad \vu_n) : \Grad \bfphi } \dt \\ 
&- \int_0^T \intQ{ \left( \frac{1}{N_k} \sum_{n=1}^{N_k}\left[ 1_{\vr_n > 0} \frac{ \vm_n \otimes \vm_n }{\vr_n} + p(\vr_n) \mathbb{I} \right]  - \left[ 1_{\Ov{\vr}_k > 0} \frac{\Ov{\vm}_k \otimes \Ov{\vm}_k }{\Ov{\vr}_k} 
 + p(\Ov{\vr}_k) \mathbb{I} \right] \right) : \Grad \bfphi } \dt, 
\end{split}
\]
or, as $(\overline\vr, \overline\vm)$ is a weak solution of the Euler system,
\[
\begin{split} 
\int_0^T &\intQ{ \left( \frac{1}{N_k} \sum_{n=1}^{N_k}\left[ 1_{\vr_n > 0} \frac{ \vm_n \otimes \vm_n }{\vr_n} + p(\vr_n) \mathbb{I} \right]  - \left[ 1_{\Ov{\vr}_k > 0} \frac{\Ov{\vm}_k \otimes \Ov{\vm}_k }{\Ov{\vr}_k} 
 + p(\Ov{\vr}_k) \mathbb{I} \right] \right) : \Grad \bfphi } \dt\\
&= \int_0^T \intQ{ \left(  \left[ 1_{\Ov{\vr}_k > 0} \frac{\Ov{\vm}_k \otimes \Ov{\vm}_k }{\Ov{\vr}_k} 
 + p(\Ov{\vr}_k) \mathbb{I} \right] - \left[ 1_{{\overline\vr} > 0} \frac{{\overline\vm} \otimes {\overline\vm} }{{\overline\vr}} 
 + p(\overline\vr) \mathbb{I} \right] \right) : \Grad \bfphi } \dt\\
&+ \frac{1}{N_k} \sum_{n=1}^{N_k} \ep_n \int_0^T \intQ{ \mathbb{S}(\Grad \vu_n) : \Grad \bfphi } \dt 
+ \int_0^T \intQ{ (\overline\vm - \Ov{\vm}_k) \cdot \partial_t \bfphi } \dt \to 0 \ \mbox{as}\ k \to \infty
\end{split}
\] 
for any $\bfphi \in C^1_c((0,T) \times Q)$. 

Next, observe that the tensor 
\[
\mathfrak{R}^1_k = \left( \frac{1}{N_k} \sum_{n=1}^{N_k}\left[ 1_{\vr_n > 0} \frac{ \vm_n \otimes \vm_n }{\vr_n} + p(\vr_n) \mathbb{I} \right]  - \left[ 1_{\Ov{\vr}_k > 0} \frac{\Ov{\vm}_k \otimes \Ov{\vm}_k }{\Ov{\vr}_k} 
 + p(\Ov{\vr}_k) \mathbb{I} \right] \right)
\]
is positively semi--definite as 
\[
\begin{split}
&\left( \frac{1}{N_k} \sum_{n=1}^{N_k}\left[ 1_{\vr_n > 0} \frac{ \vm_n \otimes \vm_n }{\vr_n} + p(\vr_n) \mathbb{I} \right]  - \left[ 1_{\Ov{\vr}_k > 0} \frac{\Ov{\vm}_k \otimes \Ov{\vm}_k }{\Ov{\vr}_k} 
 + p(\Ov{\vr}_k) \mathbb{I} \right] \right) : (\xi \otimes \xi)\\
&= \left( \frac{1}{N_k} \sum_{n=1}^{N_k}\left[ 1_{\vr_n > 0} \frac{ |\vm_n \cdot \xi|^2 }{\vr_n} + p(\vr_n) |\xi|^2 \right]  - \left[ 1_{\Ov{\vr}_k > 0} \frac{|\Ov{\vm}_k \cdot \xi|^2 }{\Ov{\vr}_k} 
 + p(\Ov{\vr}_k) |\xi|^2 \right] \right) \geq 0
\end{split}
\]
in view of convexity of the function 
\[
(\vr, \vm) \mapsto \frac{|\vm \cdot \xi|^2}{\vr} + p(\vr)|\xi|^2.
\]
Note that boundedness of the energy implies 
\[
\frac{|\vm_n|^2}{\vr_n} = 1_{\vr_n > 0} \frac{|\vm_n|^2}{\vr_n},\ \frac{|\Ov{\vm}_k|^2}{\Ov{\vr}_k} = 1_{\Ov{\vr}_k > 0} \frac{|
\Ov{\vm}_k|^2}{\Ov{\vr}_k}
 \ \mbox{a.a.}
\]

We show that 
\[
{\rm ess} \sup_{0 \leq \tau \leq T} \intQ{ {\rm trace} [\mathfrak{R}^1_k ] } \leq c(\Ov{\mathcal{E}}).
\]
As we have observed above, this is equivalent to a similar statement for the energy, namely
\[
{\rm ess} \sup_{0 \leq \tau \leq T} \intQ{ \left( \frac{1}{N_k} \sum_{k=1}^{N_k}  E(\vr_n, \vm_n)  - 
E(\Ov{\vr}_k, \Ov{\vm}_k ) \right) }\leq c(\overline{\mathcal{E}}).
\]
However, a simple computation yields 
\[
\frac{1}{N_k} \sum_{k=1}^{N_k}  E(\vr_n, \vm_n)  - 
E(\Ov{\vr}_k, \Ov{\vm}_k ) = \frac{1}{N_k} \sum_{k=1}^{N_k}  E \left (\vr_n, \vm_n \Big| \vr_\infty, \vm_\infty \right)
- E\left(\Ov{\vr}_k, \Ov{\vm}_k \Big| \vr_\infty, \vm_\infty \right),
\]
and the desired conclusion follows from the energy bound \eqref{sl4}.

Repeating the arguments of Section \ref{DPT} we conclude 
\begin{equation} \label{final}
\begin{aligned}
&\lim_{k \to \infty} \int_0^T \intQ{ \frac{1}{N_k} \sum_{k=1}^{N_k} E(\vr_n, \vm_n) \varphi } \dt  = 
\int_0^T \intQ{  E(\overline\vr, \overline\vm) \varphi } \dt \\
&\qquad\qquad \mbox{for any}\ \varphi \in C_c((0,T) \times R^d),\ 
\varphi \geq 0.
\end{aligned}
\end{equation} 
As shown in \cite[Section 5]{Fei2020A}, relation \eqref{final} implies 
\[
\lim_{k \to \infty}  \frac{1}{N_k} \sum_{k=1}^{N_k} b(\vr_n, \vm_n)  \to b(\overline\vr, \overline\vm) 
\]
for any $b \in C_c(R^{d+1})$. This yields the final conclusion 
\[
\frac{1}{N_k} \sum_{n=1}^{N_k} \Big( \| \vr_n - \overline\vr \|_{L^1((0,T) \times K)} + 
\| \vm_n - \overline\vm \|_{L^1((0,T) \times K; R^d)} \Big) \to 0 \ \mbox{as}\ k \to \infty. 
\]
Consequently,  $\mathcal{V} = \delta_{(\overline\vr,\overline \vm)}$ and $(\vrn, \vmn)_{n=1}^\infty$ statistically converges to $(\overline\vr,\overline \vm)$.

\begin{Theorem} \label{Tsc3}
In addition to the hypotheses of Theorem \ref{Tsc1}, suppose that 
\[
(\overline\vr,\overline \vm) = \mathbb{E}_{\mathcal{V}}\left[ (r, \vc{w}) \right] 
\]
is a weak solution to the Euler system. 

Then 
\[
\mathcal{V} = \delta_{(\overline\vr,\overline \vm)}
\]
and  
\begin{equation} \label{stat}
\frac{1}{N_k} \sum_{n=1}^{N_k} \Big( \| \vr_n - \overline\vr \|_{L^1((0,T) \times K)} + 
\| \vm_n - \overline\vm \|_{L^1((0,T) \times K; R^d)} \Big) \to 0 \ \mbox{as}\ k \to \infty. 
\end{equation}

\end{Theorem}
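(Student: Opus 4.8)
The plan is to show that the averages of the relative energy converge to the relative energy of the barycenter, and then to invoke the compactness result of \cite{Fei2020A} to upgrade this into strong $L^1_{\rm loc}$ convergence of $\frac{1}{N_k}\sum_n b(\vr_n,\vm_n)$; the vanishing of the associated energy defect will be obtained by the convex-body test-function argument already used in Section~\ref{DPT}.

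First I would introduce the averages $\Ov{\vr}_k=\frac{1}{N_k}\sum_{n=1}^{N_k}\vr_n$ and $\Ov{\vm}_k=\frac{1}{N_k}\sum_{n=1}^{N_k}\vm_n$. Convexity of $(\vr,\vm)\mapsto E(\vr,\vm\,|\,\vr_\infty,\vu_\infty)$ and Jensen's inequality transfer the bound \eqref{sl4} to these averages, so $\sup_t\intQ{E(\Ov{\vr}_k,\Ov{\vm}_k\,|\,\vr_\infty,\vu_\infty)}\le\Ov{\mathcal{E}}$; together with the S-convergence \eqref{sc14bis} from Theorem~\ref{Tsc1}, applied to functions $b$ approximating the coordinate projections, this yields $\Ov{\vr}_k\to\overline\vr$ and $\Ov{\vm}_k\to\overline\vm$ strongly in $L^p_{\rm loc}$ and $L^q_{\rm loc}$ below the natural energy exponents. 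Averaging the momentum balances of the $(\vr_n,\vm_n)$ then produces a momentum equation for $(\Ov{\vr}_k,\Ov{\vm}_k)$ carrying the vanishing viscous term $\frac{1}{N_k}\sum_n\ep_n\mathbb{S}(\Grad\vu_n)$ and a Jensen-type defect
\[
\mathfrak{R}^1_k=\frac{1}{N_k}\sum_{n=1}^{N_k}\Big[1_{\vr_n>0}\tfrac{\vm_n\otimes\vm_n}{\vr_n}+p(\vr_n)\mathbb{I}\Big]-\Big[1_{\Ov{\vr}_k>0}\tfrac{\Ov{\vm}_k\otimes\Ov{\vm}_k}{\Ov{\vr}_k}+p(\Ov{\vr}_k)\mathbb{I}\Big],
\]
which, by convexity of $(\vr,\vm)\mapsto|\vm\cdot\xi|^2/\vr+p(\vr)|\xi|^2$, is positively semi-definite, and whose trace is bounded by $c\Ov{\mathcal{E}}$ uniformly in $k$ — as in \eqref{R5}, this reduces to the bound for the energy defect $\frac{1}{N_k}\sum_n E(\vr_n,\vm_n)-E(\Ov{\vr}_k,\Ov{\vm}_k)$, which is nonnegative by convexity of $E$ and bounded by $\Ov{\mathcal{E}}$ via \eqref{sl4}.

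Next I would use the extra hypothesis that the barycenter $(\overline\vr,\overline\vm)$ solves the Euler system. Subtracting that equation from the averaged momentum balance and using the strong convergence of $\Ov{\vr}_k,\Ov{\vm}_k$ — so that $1_{\Ov{\vr}_k>0}\Ov{\vm}_k\otimes\Ov{\vm}_k/\Ov{\vr}_k\to1_{\overline\vr>0}\overline\vm\otimes\overline\vm/\overline\vr$ and $p(\Ov{\vr}_k)\to p(\overline\vr)$ — together with the decay of the viscous term, I obtain $\int_0^T\intQ{\mathfrak{R}^1_k:\Grad\bfphi}\dt\to0$ for every $\bfphi\in C^1_c((0,T)\times Q;R^d)$. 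At this point $\mathfrak{R}^1_k$ has exactly the three properties of $\mathfrak{R}$ exploited in Section~\ref{DPT}: positive semi-definiteness, a uniformly bounded trace, and asymptotically vanishing divergence. Testing with $\bfphi_L=\chi(|x|/L)\Grad F(|x|^2)$ as in Proposition~\ref{Psc1}, letting $L\to\infty$ (the annular remainder vanishing thanks to the trace bound) and then $k\to\infty$, gives $\int_0^T\intQ{{\rm trace}[\mathfrak{R}^1_k]\varphi}\dt\to0$ for nonnegative $\varphi$, which is precisely \eqref{final}: $\frac{1}{N_k}\sum_n E(\vr_n,\vm_n)$ converges to $E(\overline\vr,\overline\vm)$ tested against nonnegative functions.

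Finally, \cite[Section~5]{Fei2020A} turns the energy convergence \eqref{final} into strong convergence $\frac{1}{N_k}\sum_n b(\vr_n,\vm_n)\to b(\overline\vr,\overline\vm)$ in $L^1_{\rm loc}$ for every $b\in C_c(R^{d+1})$; approximating $|\vr-\overline\vr|$ and $|\vm-\overline\vm|$ on compacts by such $b$ yields \eqref{stat}, and comparison with the definition of the barycenter forces $\mathcal{V}=\delta_{(\overline\vr,\overline\vm)}$. The hard part is the elimination of the $k$-dependent defect $\mathfrak{R}^1_k$: one has to combine positive semi-definiteness (to neutralize the far-field cut-off), the uniform trace bound coming from the averaged energy (to control the annular remainder), and the Euler equation for the barycenter (to cancel the genuine part of the momentum balance) all at once — and it is precisely the strong, rather than merely weak, convergence of $\Ov{\vr}_k,\Ov{\vm}_k$ provided by Theorem~\ref{Tsc1} that legitimizes passing to the limit in the nonlinear convective flux.
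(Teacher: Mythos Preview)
Your proposal is correct and follows essentially the same route as the paper: form the Ces\`aro averages $(\Ov{\vr}_k,\Ov{\vm}_k)$, use the S--convergence from Theorem~\ref{Tsc1} to get their strong convergence to the barycenter, build the positively semi--definite Jensen defect $\mathfrak{R}^1_k$ with uniformly bounded trace, eliminate it by replaying the convex--body test--function argument of Section~\ref{DPT} against the Euler equation satisfied by $(\overline\vr,\overline\vm)$, and then invoke \cite[Section~5]{Fei2020A}. The only imprecision is the stated order of limits: since the vanishing $\int_0^T\intQ{\mathfrak{R}^1_k:\Grad\bfphi_L}\dt\to 0$ is available only for fixed $\bfphi_L\in C^1_c$, one sends $k\to\infty$ first and then $L\to\infty$, the annular remainder being controlled uniformly in $k$ by the trace bound --- which you already record.
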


As shown by Connor \cite{Conn}, relation \eqref{stat} yields statistical convergence, modulo the subsequence $(N_k)_{k \geq 1}$, of the sequence $(\vr_n, \vm_n)$:
For any $\ep > 0$, 
\[
\frac{\# \left\{ n \leq N_k \ \Big| \Big( \| \vr_n - \overline\vr \|_{L^1((0,T) \times K)} + 
\| \vm_n - \overline\vm \|_{L^1((0,T) \times K; R^d)} \Big) > \ep \right\} } {N_k} \to 0 \ \mbox{as}\ k \to \infty.
\]

\section{Concluding remarks}
\label{D}

We have studied the vanishing viscosity limit for the compressible Navier--Stokes fluid flow around a convex obstacle. 
We have shown that the statistical limit cannot be a solution of the associated Euler system driven by stochastic forcing of It\^ o's 
type. As a consequence,  there are two basic scenarios to describe the statistical limit:
\begin{itemize}
\item 
{\bf Oscillatory limit.} The limit is in the weak sense and can be described in terms of a Young measure. In that case,   
the weak limit \emph{is not} a weak solution of the Euler system. Statistically, however, the asymptotic limit is a singleton (Dirac measure). Note that this scenario is \emph{compatible} with the hypothesis that the limit is independent of the choice of $\ep_n \searrow 0$.

\item 
{\bf Statistical limit.} The limit is a statistical solution of the Euler system. If this is the case, the sequence 
of the Navier--Stokes system S--converges in the sense of \cite{Fei2020A}.
This is apparently in agreement with the numerical experiments performed by Fjordholm et al. \cite{FjKaMiTa}.  At the theoretical level, this alternative is also in agreement with the Kolmogorov hypothesis concerning turbulent flow 
that predicts compactness in the strong Lebesgue sense, see Chen and Glimm \cite{CheGli}.  We point out that 
this scenario \emph{is not compatible} with the hypothesis that the limit is independent of 
$\ep_n \searrow 0$ unless it is a monoatomic measure in which case the convergence must be strong.

\end{itemize}

The phenomena of oscillatory and statistical limit are in a way complementary. The former 
asserts weak (oscillatory) convergence to a single limit, the latter means strong convergence for any suitable subsequence but 
non--existence of a single limit. In reality, they can be, of course, mixed up. In both cases, the generating sequence of solutions 
of the Navier--Stokes system is S--convergent in the sense of \cite{Fei2020A}.

The results depend essentially on the properties of the \emph{compressible} Euler system, and also on the geometry of the physical space -- an unbounded  domain exterior to a convex set in $R^d$. They might be extended to the case of the full Euler system, however, 
the zero viscosity limit here is more delicate as the existence theory for the Navier--Stokes--Fourier system is available only for a particular class of constitutive equations.  
 
\def\cprime{$'$} \def\ocirc#1{\ifmmode\setbox0=\hbox{$#1$}\dimen0=\ht0
  \advance\dimen0 by1pt\rlap{\hbox to\wd0{\hss\raise\dimen0
  \hbox{\hskip.2em$\scriptscriptstyle\circ$}\hss}}#1\else {\accent"17 #1}\fi}


\end{document}